\newtheorem{theorem}{Theorem}
\newtheorem{lemma}[theorem]{Lemma}
\newtheorem{proposition}[theorem]{Proposition}
\newtheorem{corollary}[theorem]{Corollary}
\newtheorem{example}[theorem]{Example}
\newtheorem{question}{Question}
\newtheorem{definition}[theorem]{Definition}
\title{Cardinal invariants for the $G_\delta$ topology}
\author{Angelo Bella}
\address{
Department of Mathematics and Computer Science \\
University of Catania \\
Citt\'a universitaria\\  
viale A. Doria 6 \\
95125 Catania, Italy}
\email{bella@dmi.unict.it}
\author{Santi Spadaro}
\address{
Department of Mathematics and Computer Science \\
University of Catania \\
Citt\'a universitaria\\  
viale A. Doria 6 \\
95125 Catania, Italy}
\email{santidspadaro@gmail.com}
\subjclass[2000]{Primary: 54A25, Secondary: 54D20, 54G20}
\keywords{cardinal invariant, $G_\delta$-topology, weak Lindel\"of number, Lindel\"of degree, homogeneous space}
\begin{document}

\maketitle

\begin{abstract}
We prove upper bounds for the spread, the Lindel\"of number and the weak Lindel\"of number of the $G_\delta$ topology on a topological space and apply a few of our bounds to give a short proof to a recent result of Juh\'asz and van Mill regarding the cardinality of a $\sigma$-countably tight homogeneous compactum.
\end{abstract}

\section{Introduction}

All spaces are assumed to be $T_1$. The word \emph{compactum} indicates a compact Hausdorff space.

Given a topological space $X$ we can consider a finer topology on $X$ by declaring countable intersections of open subsets of $X$ to be a base. The new space is called the \emph{$G_\delta$ topology of $X$} and is denoted with $X_\delta$.

There are various papers in the literature investigating what properties of $X$ are preserved when passing to $X_\delta$ and presenting bounds for cardinal invariants on $X_\delta$ in terms of the cardinal invariants of $X$ (see for example \cite{Juhasz}, \cite{FW}, \cite{Pytkeev}, \cite{KMS}). Moreover, results of that kind have found applications to central topics in general topology like the study of covering properties in box products (see, for example, \cite{Kunen}), cardinal invariants for homogeneous compacta (see, for example \cite{Arhangel'skiiGdelta}, \cite{Carlson}, \cite{CPR} and \cite{SS}) and spaces of continuous functions (See \cite{Arhangel'skiiCp}).

Two of the early results on this topic are Juh\'asz's bound $c(X_\delta) \leq 2^{c(X)}$ for every compact Hausdorff space $X$, where $c(X)$ denotes the \emph{cellularity} of $X$ and Arhangel'skii's result that the $G_\delta$ topology on a Lindel\"of regular scattered space is Lindel\"of. Juh\'asz's bound is tight in the sense that it's not possible to prove that $c(X_\delta) \leq c(X)^\omega$ for every compact space $X$ (see \cite{Fleissner}) and the scattered property is essential in Arhangel'skii's result because there are compact Hausdorff spaces whose $G_\delta$-topology even has (weak) Lindel\"of number $\mathfrak{c}^+$ (see \cite{SS}).

In this paper we prove various new bounds for cardinal invariants on the $G_\delta$ topology. For example we prove that $s(X_\delta) \leq 2^{s(X)}$ for every space $X$, where $s(X)$ is the spread of $X$ (that is, the supremum of the cardinalities of the discrete subsets of $X$). For a regular space we prove that $L(X_\delta) \leq \min{ \{psw(X)^{d(X)}, 2^{s(X)} \}}$, where $L(X)$ denotes the Lindel\"of degree of $X$, $psw(X)$ denotes the point-separating weight of $X$ and $d(X)$ denotes the density of $X$.

Many questions are left open. For example we don't know whether the inequality $t(X_\delta) \leq 2^{t(X)}$ is true, where $t$ denotes the \emph{tightness}, even when $X$ is a compact space.

Finally, we exploit a few of our results to give a short proof of a recent result of Juh\'asz and van Mill on the cardinality of homogeneous compacta.

Our notation regarding cardinal functions follows \cite{Ju}. The remaining undefined notions can be found in \cite{E}. 

In our proofs we often use elementary submodels of the structure $(H(\mu), \epsilon)$. Dow's survey \cite{D} is enough to read our paper, and we give a brief informal refresher here. Recall that $H(\mu)$ is the set of all sets whose transitive closure has cardinality smaller than $\mu$. When $\mu$ is regular uncountable, $H(\mu)$ is known to satisfy all axioms of set theory, except the power set axiom. We say, informally, that a formula is satisfied by a set $S$ if it is true when all existential quantifiers are restricted to $S$. A set $M \subset H(\mu)$ is said to be an elementary submodel of $H(\mu)$ (and we write $M \prec H(\mu)$) if a formula with parameters in $M$ is satisfied by $H(\mu)$ if and only if it is satisfied by $M$. 

The downward L\"owenheim-Skolem theorem guarantees that for every $S \subset H(\mu)$, there is an elementary submodel $M \prec H(\mu)$ such that $|M| \leq |S| \cdot \omega$ and $S \subset M$. This theorem is sufficient for many applications, but it is often useful (especially in cardinal bounds for topological spaces) to have the following closure property. We say that $M$ is \emph{$\kappa$-closed} if for every $S \subset M$ such that $|S| \leq \kappa$ we have $S \in M$. For large enough regular $\mu$ and for every countable set $S \subset H(\mu)$ there is always a $\kappa$-closed elementary submodel $M \prec H(\mu)$ such that $|M|=2^{\kappa}$ and $S \subset M$.

The following theorem is also used often: let $M \prec H(\mu)$ such that $\kappa + 1 \subset M$ and $S \in M$ be such that $|S| \leq \kappa$. Then $S \subset M$.

\section{Cardinal invariants for the $G_\delta$ topology}

Let's start by listing the simplest bounds for cardinal functions of the $G_\kappa$ topology. They are probably folklore, and we include them just for the convenience of the reader.

\begin{proposition} \label{propeasy}{ \ \\}
\begin{enumerate}
\item $w(X_\kappa) \leq (w(X))^\kappa$.
\item $\chi(X_\kappa) \leq (\chi(X))^\kappa$.
\item If $X$ is regular then $d(X_\kappa) \leq 2^{d(X) \cdot \kappa}$.
\item If $X$ is regular, then $\pi w(X_\kappa) \leq 2^{\pi w(X) \cdot \kappa}$.
\end{enumerate}
\end{proposition}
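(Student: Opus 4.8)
The plan is to establish each of the four items by exhibiting an explicit base or $\pi$-base for the $G_\kappa$ topology built out of the given structure on $X$, then counting. Let me think through each part.

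For (1), I'd take a base $\mathcal{B}$ for $X$ with $|\mathcal{B}| = w(X)$; the collection of all intersections $\bigcap \mathcal{C}$ with $\mathcal{C} \in [\mathcal{B}]^{\leq \kappa}$ forms a base for $X_\kappa$ (any basic open set of $X_\kappa$ is an intersection of $\leq \kappa$ open sets of $X$, each of which is a union of members of $\mathcal{B}$, and a standard argument lets us replace these by $\leq\kappa$ members of $\mathcal{B}$ around a given point). This collection has size at most $|\mathcal{B}|^\kappa = w(X)^\kappa$. Part (2) is the local version of the same argument: fix for each $x$ a neighborhood base of size $\chi(X)$ and take $\leq\kappa$-sized intersections; each point then has a local base in $X_\kappa$ of size $\leq \chi(X)^\kappa$.

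For (3), let $D \subset X$ be dense with $|D| = d(X)$. Since $X$ is regular, $X_\kappa$ is also regular (a $G_\kappa$ set containing $x$ contains the closure in $X$, hence in $X_\kappa$, of a smaller $G_\kappa$ neighborhood of $x$), and a regular space of density $\lambda$ and weight... — no, better: I'd use that $w(X) \leq 2^{d(X)}$ for regular $X$ (a regular space embeds in a product of $2^{d(X)}$ many copies of $[0,1]$... more directly, $|RO(X)| \leq 2^{d(X)}$ and the regular open sets form a base-like family), giving $w(X_\kappa) \leq w(X)^\kappa \leq (2^{d(X)})^\kappa = 2^{d(X)\cdot\kappa}$, and then $d(X_\kappa) \leq w(X_\kappa) \leq 2^{d(X)\cdot\kappa}$. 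For (4), the same idea with $\pi$-weight: for regular $X$ one has $\pi w(X) \leq 2^{d(X)}$, but here the hypothesis is phrased in terms of $\pi w(X)$ directly, so I'd take a $\pi$-base $\mathcal{P}$ of size $\pi w(X)$, observe that $\leq\kappa$-fold intersections of members of $\mathcal{P}$ that happen to be nonempty need not be a $\pi$-base — this is the subtle point — so instead I'd pass through density: $d(X_\kappa) \leq 2^{d(X_\kappa)}$ isn't circular if I first bound $d(X_\kappa)$, and $\pi w(X_\kappa) \leq 2^{d(X_\kappa)}$ for regular $X_\kappa$; combining with a bound $d(X_\kappa) \leq \pi w(X)\cdot\kappa$ (a dense set obtained by picking one point from each member of a $\pi$-base for $X$... wait, that gives density of $X$, not $X_\kappa$) would need care.

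The main obstacle I anticipate is part (4): unlike weight, $\pi$-weight does not behave well under refinement because a $\pi$-base for $X$ need not yield a $\pi$-base for $X_\kappa$ (a small nonempty $G_\kappa$ set may contain no nonempty $G_\kappa$ set of the special form). I expect the right route is to go through density and use the regular-space inequality $\pi w(Y) \leq 2^{d(Y)}$ applied to $Y = X_\kappa$, combined with the bound $d(X_\kappa) \leq 2^{d(X)\cdot\kappa}$ from part (3), which only gives $\pi w(X_\kappa) \leq 2^{2^{d(X)\cdot\kappa}}$ — too weak. So more likely one needs a direct argument: fix a $\pi$-base $\mathcal{P}$ of minimal size for $X$ and, using regularity, a dense $D$ with $|D| \le d(X)$, then build a family of $G_\kappa$ sets of size $2^{d(X)\cdot\kappa}$ that is $\pi$-dense in $X_\kappa$ by taking, for each $\leq\kappa$-sequence from $\mathcal{P}$ whose intersection has nonempty interior in $X_\kappa$, a witnessing regular-closed-shrinking; the counting still lands at $2^{\pi w(X)\cdot\kappa}$ since $|\mathcal{P}|^\kappa \le (2^{\pi w(X)})^\kappa$. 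I'd present (1) and (2) in full, (3) as a one-line consequence of (1) plus $w \leq 2^d$ for regular spaces, and spend the bulk of the argument pinning down (4).
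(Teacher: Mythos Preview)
Your treatment of (1), (2), and (3) is fine and matches the paper's approach exactly: the paper dismisses (1) and (2) as easy and proves (3) via $d(X_\kappa) \leq w(X_\kappa) \leq w(X)^\kappa \leq 2^{d(X)\cdot\kappa}$, using $w(X) \leq 2^{d(X)}$ for regular $X$.

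The genuine gap is in (4). You correctly identify that $\leq\kappa$-fold intersections from a $\pi$-base of $X$ need not form a $\pi$-base of $X_\kappa$, and you correctly reject the route through $\pi w(X_\kappa) \leq 2^{d(X_\kappa)}$ as giving only a doubly-exponential bound. But your final proposed construction (``for each $\leq\kappa$-sequence from $\mathcal{P}$ whose intersection has nonempty interior in $X_\kappa$, a witnessing regular-closed-shrinking'') does not actually produce a $\pi$-base for $X_\kappa$: a nonempty $G_\kappa$ set $G$ need not contain the intersection of any $\kappa$-sequence from $\mathcal{P}$, so there is nothing to shrink. The argument as written does not close.

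The paper's route avoids constructing a $\pi$-base for $X_\kappa$ altogether. It uses the classical inequality $w(X) \leq \pi w(X)^{c(X)}$, valid for regular spaces, together with $c(X) \leq \pi w(X)$, to get $w(X) \leq 2^{\pi w(X)}$. Then simply
\[
\pi w(X_\kappa) \leq w(X_\kappa) \leq w(X)^\kappa \leq \pi w(X)^{c(X)\cdot\kappa} \leq \pi w(X)^{\pi w(X)\cdot\kappa} \leq 2^{\pi w(X)\cdot\kappa}.
\]
This is the missing idea: bound $\pi w(X_\kappa)$ by $w(X_\kappa)$ and then control $w(X)$ in terms of $\pi w(X)$, rather than trying to lift a $\pi$-base directly.
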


\begin{proof}
The first two items are easy. 

As for the third item, recalling that $w(X) \leq 2^{d(X)}$ for regular spaces, we have that $d(X_\kappa) \leq w(X_\kappa) \leq w(X)^\kappa \leq 2^{d(X) \cdot \kappa}$.

To prove the fourth item, recall that $w(X) \leq (\pi w(X))^{c(X)}$ for every regular space $X$. Hence $\pi w(X_\kappa) \leq w(X_\kappa) \leq w(X)^\kappa \leq (\pi w(X))^{c(X) \cdot \kappa} \leq (\pi w(X))^{\pi w(X) \cdot \kappa} \leq 2^{\pi w(X) \cdot \kappa}$. 

\end{proof}

Regularity is essential in both the third and the fourth item, as the following example shows.

\begin{example}
A Hausdorff space $X$ such that: $$\pi w(X_\delta) \geq d(X_\delta)>2^{\pi w(X)} \geq 2^{d(X)}$$
\end{example}

\begin{proof}
Let $X=\beta \omega$, provided with the following topology: every principal ultrafilter is isolated. A basic neighbourhood of a non-principal ultrafilter $p$ has the form $\{p\} \cup A \setminus F$, where $A \in p$ and $F$ is a finite set. The space $X$ has a countable $\pi$-base, but $X_\delta$ is a discrete set of cardinality $2^{\mathfrak{c}}$.
\end{proof}

The following example shows that, unlike in the case of the $\pi$-weight, there is no bound on the $\pi$-character of the $G_\delta$-topology on a regular space of countable $\pi$-character.

\begin{example}
For every cardinal $\kappa$, there is a hereditarily normal space of countable $\pi$-character $X(\kappa)$ such that $\pi \chi(X(\kappa)_\delta) \geq \kappa$.
\end{example}

\begin{proof}
Let $X(\kappa)$ be the space obtained by taking the sum of a convergent sequence and the one-point compactification of a discrete set of size $\kappa$ and then collapsing the limit points to a single point $\infty$. In the resulting space, every point is isolated except for $\infty$, which nevertheless has a countable $\pi$-base. So $\pi \chi(X(\kappa))=\omega$. However, $X(\kappa)_\delta$ is homeomorphic to the one-point Lindel\"ofication of a discrete set of size $\kappa$. So its $\pi$-character is no smaller than $\kappa$.
\end{proof}

One of the early results regarding cardinal invariants for the $G_\delta$ topology was proved by Juh\'asz in \cite{Juhasz} and was originally motivated by a problem of Arhangel'skii regarding the weak Lindel\"of number of the $G_\delta$ topology on a compactum. Its proof is an application of the Erd\"os-Rado theorem from infinite combinatorics.

\begin{theorem}
(Juh\'asz) Let $X$ be a countably compact regular space. Then $c(X_\delta) \leq 2^{c(X)}$.
\end{theorem}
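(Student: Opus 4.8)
The plan is to argue by contradiction via the Erd\H os--Rado partition relation $(2^\kappa)^+ \to (\kappa^+)^2_\omega$, where $\kappa = c(X)$. Suppose $\{U_\alpha : \alpha < (2^\kappa)^+\}$ is a cellular family in $X_\delta$. Each $U_\alpha$ is a nonempty $G_\delta$ set of $X$, so write $U_\alpha = \bigcap_{n<\omega} V_{\alpha,n}$ with each $V_{\alpha,n}$ open in $X$; after replacing $V_{\alpha,n}$ with $\bigcap_{i\le n} V_{\alpha,i}$ we may assume the sequence $(V_{\alpha,n})_n$ is decreasing. Fix a point $x_\alpha \in U_\alpha$.

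First I would use regularity to replace each $U_\alpha$ by a closed $G_\delta$ set contained in it, together with a canonical outer approximation by closures. By regularity, recursively choose open sets $W_{\alpha,n}$ with $x_\alpha \in W_{\alpha,n}$, with $\overline{W_{\alpha,0}} \subseteq V_{\alpha,0}$, and with $\overline{W_{\alpha,n+1}} \subseteq W_{\alpha,n} \cap V_{\alpha,n+1}$. Set $F_\alpha = \bigcap_n \overline{W_{\alpha,n}}$. Then $(\overline{W_{\alpha,n}})_n$ is a decreasing sequence of closed sets, $x_\alpha \in F_\alpha$, and $F_\alpha \subseteq \bigcap_n V_{\alpha,n} = U_\alpha$; in particular the sets $F_\alpha$ are pairwise disjoint.

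The heart of the argument, and the place where countable compactness enters, is the observation that for $\alpha \ne \beta$ there is some $n$ with $\overline{W_{\alpha,n}} \cap \overline{W_{\beta,n}} = \emptyset$. Indeed, otherwise the sets $C_n = \overline{W_{\alpha,n}} \cap \overline{W_{\beta,n}}$ would form a decreasing sequence of nonempty closed sets, hence a countable family of closed sets with the finite intersection property, so by countable compactness $\bigcap_n C_n \ne \emptyset$; but $\bigcap_n C_n = F_\alpha \cap F_\beta \subseteq U_\alpha \cap U_\beta = \emptyset$, a contradiction. This is exactly what converts ``disjoint in $X_\delta$'' into ``already disjoint at a finite stage in $X$'', and I expect it to be the one step that needs to be stated with some care; the regular shrinking and the partition-calculus application are routine.

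Finally I would define a coloring $c \colon [(2^\kappa)^+]^2 \to \omega$ by letting $c(\{\alpha,\beta\})$ be the least $n$ with $\overline{W_{\alpha,n}} \cap \overline{W_{\beta,n}} = \emptyset$. Since $\omega \le \kappa$, Erd\H os--Rado provides a set $H$ of size $\kappa^+$ that is homogeneous for $c$, say with constant value $n_0$. Then $\{W_{\alpha,n_0} : \alpha \in H\}$ is a family of $\kappa^+$ pairwise disjoint open subsets of $X$: each is nonempty because $x_\alpha \in W_{\alpha,n_0}$, and for distinct $\alpha,\beta \in H$ we have $W_{\alpha,n_0} \cap W_{\beta,n_0} \subseteq \overline{W_{\alpha,n_0}} \cap \overline{W_{\beta,n_0}} = \emptyset$. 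This contradicts $c(X) = \kappa$, completing the proof.
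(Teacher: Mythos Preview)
The paper does not actually prove this theorem; it merely states it, attributes it to Juh\'asz, and remarks that ``its proof is an application of the Erd\H os--Rado theorem from infinite combinatorics.'' Your argument is precisely that classical application of Erd\H os--Rado and is correct, so it is in full accord with what the paper alludes to.

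One minor point worth tightening: a cellular family in $X_\delta$ consists of pairwise disjoint nonempty \emph{open} sets in $X_\delta$, and these are in general arbitrary unions of $G_\delta$ sets, not $G_\delta$ sets themselves. You should insert one sentence shrinking each $U_\alpha$ to a basic open set (i.e.\ a nonempty $G_\delta$ subset of $X$) before writing $U_\alpha = \bigcap_{n<\omega} V_{\alpha,n}$. Everything else---the regular shrinking to get the $W_{\alpha,n}$, the use of countable compactness to separate pairs at a finite stage, and the Erd\H os--Rado coloring---is carried out correctly.
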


Note that regularity is essential in the above theorem as Vaughan \cite{V} constructed a countably compact Hausdorff space with points $G_\delta$ and cardinality larger than the continuum which is even separable.

We also exploit the Erd\"os-Rado theorem in our next result. Recall that $s(X)=\sup \{|D|: D$ is a discrete subset of $X \}$.

\begin{theorem} \label{spread}
Let $X$ be any space and $\kappa$ be a cardinal. Then $s(X_\kappa) \leq 2^{s(X) \cdot \kappa}$.
\end{theorem}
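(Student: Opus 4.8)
The plan is to argue by contradiction using the Erd\H{o}s--Rado theorem, in the same spirit as Juh\'asz's bound $c(X_\delta)\le 2^{c(X)}$ recalled above. First note that if $\kappa$ is finite then $X_\kappa=X$ and there is nothing to prove, so assume $\kappa$ is infinite and put $\lambda=s(X)\cdot\kappa$. It suffices to show that no subset of $X$ which is discrete in the topology of $X_\kappa$ has cardinality greater than $2^\lambda$; so, aiming at a contradiction, let $D\subseteq X$ be discrete in $X_\kappa$ with $|D|=(2^\lambda)^+$ (if there were a larger such set, a subset of the right size would do). For each $d\in D$ fix open sets $\{V_{d,\alpha}:\alpha<\kappa\}$ of $X$ with $d\in\bigcap_{\alpha<\kappa}V_{d,\alpha}$ and $D\cap\bigcap_{\alpha<\kappa}V_{d,\alpha}=\{d\}$.

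Next I would fix a well-order $<$ on $D$ and use it to define a coloring that records, for each pair, how to separate its two points from each other \emph{inside $X$}. Given $d<e$ in $D$, since $e\ne d$ we have $e\notin\bigcap_{\alpha<\kappa}V_{d,\alpha}$, so we may pick $f(d,e)<\kappa$ with $e\notin V_{d,f(d,e)}$; symmetrically pick $g(d,e)<\kappa$ with $d\notin V_{e,g(d,e)}$. Define $c:[D]^2\to\kappa\times\kappa$ by $c(\{d,e\})=(f(d,e),g(d,e))$ whenever $d<e$. Since $|\kappa\times\kappa|=\kappa\le\lambda$, the Erd\H{o}s--Rado relation $(2^\lambda)^+\to(\lambda^+)^2_\lambda$ applies and yields a set $D'\subseteq D$ with $|D'|=\lambda^+$ and a fixed pair $(\gamma,\delta)\in\kappa\times\kappa$ such that $c(\{d,e\})=(\gamma,\delta)$ for all $d<e$ in $D'$.

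The key point is then that $D'$ is discrete already in the original topology of $X$. Fix $d\in D'$. For every $e\in D'$ with $e>d$ homogeneity applied to $\{d,e\}$ gives $f(d,e)=\gamma$, hence $e\notin V_{d,\gamma}$; for every $e\in D'$ with $e<d$ homogeneity applied to $\{e,d\}$ (of which $e$ is the smaller element) gives $g(e,d)=\delta$, hence $e\notin V_{d,\delta}$. Therefore $V_{d,\gamma}\cap V_{d,\delta}$ is an open neighbourhood of $d$ in $X$ meeting $D'$ only in $d$. Thus $\lambda^+=|D'|\le s(X)\le\lambda$, a contradiction, and the theorem follows.

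The only genuine obstacle is choosing the coloring correctly: a naive coloring by the single index $f(d,e)$ only allows one to separate each point of the homogeneous set from the points lying above it, so one must encode separation in both directions at once --- here by coloring with pairs from $\kappa\times\kappa$ (equivalently, one could iterate Erd\H{o}s--Rado twice). Everything else is bookkeeping, the two facts being used being that $|\kappa\times\kappa|\le\lambda$, so that the partition relation is available with $\lambda$ colors, and that the intersection of the two relevant open sets $V_{d,\gamma}\cap V_{d,\delta}$ is still open in $X$.
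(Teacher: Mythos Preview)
Your proof is correct and follows essentially the same approach as the paper: a contradiction argument via the Erd\H{o}s--Rado partition relation $(2^\lambda)^+\to(\lambda^+)^2_\lambda$, with the coloring recording a pair of indices that witness mutual separation. The only cosmetic differences are that the paper assumes without loss of generality that $s(X)\le\kappa$ (so $\lambda=\kappa$) and phrases the coloring by partitioning $[D]^2$ into color classes $C_{\alpha,\beta}$ rather than via explicit choice functions $f,g$.
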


\begin{proof}
Without loss of generality we can assume that $s(X) \leq \kappa$. Suppose by contradiction that there is a discrete set $D \subset X_\kappa$ of cardinality $\geq (2^\kappa)^+$. For every $x \in D$ we can find a $G_\kappa$ set $G_x$ in $X$ such that $G_x \cap D=\{x\}$. Let $\{U^x_\alpha: \alpha < \kappa \}$ be a sequence of open sets such that $G_x=\bigcap \{U^x_\alpha: \alpha < \kappa \}$. Let $\prec$ be a linear ordering on $X$. For every $\alpha, \beta <\kappa$ let $C_{\alpha, \beta}=\{\{x,y\} \in [D]^2: x \prec y \wedge x \notin U^y_\alpha \wedge y \notin U^x_\beta \}$. Then $\{C_{\alpha, \beta}: (\alpha, \beta) \in \kappa^2 \}$ is a coloring of $[D]^2$ into $\kappa$ many colors. By the Erd\"os-Rado theorem we can find a set $T \subset D$ of cardinality $\kappa^+$ and a pair of ordinals $(\gamma, \delta) \in \kappa^2$ such that $[T]^2 \subset C_{\gamma, \delta}$. Note now that $U^x_\gamma \cap U^x_\delta \cap T=\{x\}$ for every $x \in T$. Hence $T$ is a discrete subset of $X$ of cardinality $\kappa^+$, which contradicts $s(X)=\kappa$.
\end{proof}

\begin{corollary}
(Hajnal and Juh\'asz) Let $X$ be a $T_1$ space. Then $|X| \leq 2^{s(X) \cdot \psi(X)}$.
\end{corollary}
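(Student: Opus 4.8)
The plan is to deduce this immediately from Theorem \ref{spread} by choosing the parameter $\kappa$ so that the $G_\kappa$ topology becomes discrete. Set $\kappa = \psi(X)$ (as usual we adopt the convention $\psi(X) \geq \omega$). Since $X$ is $T_1$, for every point $x \in X$ there is a family of open sets $\{U^x_\alpha : \alpha < \kappa\}$ with $\bigcap_{\alpha < \kappa} U^x_\alpha = \{x\}$; by the very definition of the $G_\kappa$ topology this intersection is open in $X_\kappa$, so every singleton is open in $X_\kappa$. Hence $X_\kappa$ is a discrete space.

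Once $X_\kappa$ is known to be discrete, every subset of $X_\kappa$ is a discrete subspace, so $s(X_\kappa) = |X_\kappa| = |X|$. Feeding this into Theorem \ref{spread} with $\kappa = \psi(X)$ yields
\[
|X| = s(X_\kappa) \leq 2^{s(X) \cdot \kappa} = 2^{s(X) \cdot \psi(X)},
\]
which is exactly the claimed inequality.

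There is essentially no obstacle to overcome: the only point worth noting is that the pseudocharacter is precisely the parameter that collapses the $G_\kappa$ topology to the discrete topology in the $T_1$ setting, and this is immediate from the definitions (the convention $\psi(X) \geq \omega$ is used only so that a pointwise intersection of $\psi(X)$ open sets is a genuine basic open set of $X_{\psi(X)}$). So the whole proof is a one-line reduction to Theorem \ref{spread}.
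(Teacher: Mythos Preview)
Your proof is correct and is essentially the same as the paper's: both note that $X_\kappa$ is discrete once $\kappa \geq \psi(X)$, so that $|X| = s(X_\kappa)$, and then apply Theorem~\ref{spread}. The only cosmetic difference is that the paper takes $\kappa = s(X)\cdot\psi(X)$ while you take $\kappa = \psi(X)$; both choices give the same bound $2^{s(X)\cdot\psi(X)}$.
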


\begin{proof}
Set $\kappa=s(X) \cdot \psi(X)$. By the above theorem we have $s(X_\kappa) \leq 2^\kappa$, but since $X_\kappa$ is discrete we must have $|X| \leq 2^\kappa$.
\end{proof}

The next example shows that $2^{s(X) \cdot \kappa}$ cannot be replaced with $s(X)^\kappa$ in Theorem $\ref{spread}$, even for compact LOTS.

\begin{example}
There is a compact linearly ordered space $L$ such that $s(L_\delta) > s(L)^\omega$.
\end{example}

\begin{proof}
Fleissner constructed in \cite{Fleissner} a compact linearly ordered space $L$ such that $c(L) \leq \mathfrak{c}$ and $L$ has a $\mathfrak{c}^+$-sized subset $S$ consisting of $G_\delta$ points. Since $c(X)=s(X)$ for every linearly ordered space $X$ we must have $s(L) \leq \mathfrak{c}$, but it's clear that $s(L_\delta) \geq \mathfrak{c}^+$.
\end{proof}

Recall that the Lindel\"of degree of a topological space $X$ ($L(X)$) is defined as the minimum cardinal $\kappa$ such that for every open cover of $X$ has a $\kappa$-sized subcover.

The weak Lindel\"of degree of $X$ ($wL(X)$) is defined as the minimum cardinal $\kappa$ such that, for every open cover $\mathcal{U}$ of $X$ there is a $\kappa$-sized subcollection $\mathcal{V} \subset \mathcal{U}$ such that $X \subset \overline{\bigcup \mathcal{V}}$.

At the 1970 International Congress of Mathematicians in Nice, France, Arhangel'skii asked whether the weak Lindel\"of degree of a compact space with its $G_\delta$ topology is always bounded by the continuum. A counterexample has recently been given in \cite{SS} but various related bounds for the (weak) Lindel\"of number of the $G_\delta$ topology have been presented in the literature (see, for example \cite{FW}, \cite{Pytkeev}, \cite{Juhasz} and \cite{CPR}).

A set $G \subset X$ is called a $G^c_\kappa$ set if there is a family $\{U_\alpha: \alpha < \kappa \}$ of open subsets of $X$ such that $G=\bigcap \{U_\alpha: \alpha < \kappa\}=\bigcap \{\overline{U_\alpha}: \alpha < \kappa \}$.

Given a space $X$, we denote with $X^c_\kappa$ the topology generated by the $G^c_\kappa$ subsets of $X$. Obviously if $X$ is regular, then $X_\kappa=X^c_\kappa$.

\begin{theorem} \label{mainthm}
Let $X$ be any space and $\kappa$ be a cardinal.Then $L(X^c_\kappa) \leq 2^{s(X) \cdot \kappa}$.
\end{theorem}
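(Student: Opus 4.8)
The plan is to combine an elementary submodel argument with Theorem~\ref{spread}. Suppose toward a contradiction that $\mathcal{U}$ is an open cover of $X^c_\kappa$ with no subcover of size $\leq 2^{s(X) \cdot \kappa}$. Write $\lambda = 2^{s(X) \cdot \kappa}$, take $\mu$ large enough, and build an elementary submodel $M \prec H(\mu)$ with $\{X, \mathcal{U}, \ldots\} \in M$, $|M| = \lambda$, $\lambda + 1 \subset M$, and $M$ closed under $\kappa$-sequences (possible since $\lambda^\kappa = \lambda$ because $\kappa \leq s(X) \cdot \kappa$ and $\lambda = 2^{s(X)\cdot\kappa}$, so $(2^{s(X)\cdot\kappa})^\kappa = 2^{s(X)\cdot\kappa}$). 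The $\kappa$-closure is the crucial property: it ensures that whenever $\{U_\alpha : \alpha < \kappa\}$ is a $\kappa$-sequence of open sets coded in $M$, the associated $G^c_\kappa$ set $\bigcap_\alpha U_\alpha = \bigcap_\alpha \overline{U_\alpha}$ also lies in $M$, so a basic neighbourhood in $X^c_\kappa$ around a point of $M$ can be found inside $M$.

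Since $\mathcal{U}$ has no small subcover and $\mathcal{U} \cap M$ has size $\leq \lambda$, elementarity gives that $\mathcal{U} \cap M$ is not a cover, so I can pick $x \in X \setminus \bigcup(\mathcal{U} \cap M)$. Now I want to manufacture a large discrete subset of $X$ to contradict the assumption (the $2^{s(X)\cdot\kappa}$ bound), or more precisely to derive a contradiction with $s(X) \le s(X)\cdot\kappa$. For each point $y \in M \cap X$, since $\mathcal{U}$ covers $X^c_\kappa$ there is (by elementarity, inside $M$) some $U_y \in \mathcal{U} \cap M$ and a $G^c_\kappa$ set $G_y \in M$ with $y \in G_y \subseteq U_y$; fix an associated open sequence $\{V^y_\alpha : \alpha < \kappa\} \in M$ with $G_y = \bigcap_\alpha V^y_\alpha = \bigcap_\alpha \overline{V^y_\alpha}$. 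Because $x \notin \bigcup(\mathcal{U}\cap M)$, in particular $x \notin U_y \supseteq \overline{G_y}^{\,X}$... but we only know $x \notin G_y$; the point is rather that $x \notin \overline{V^y_\alpha}$ for some $\alpha = \alpha(y) < \kappa$, since $x \notin G_y = \bigcap_\alpha \overline{V^y_\alpha}$. Thus to each $y \in M \cap X$ I attach an ordinal $\alpha(y) < \kappa$ and an open set $V^y_{\alpha(y)}$ containing $y$ whose closure misses $x$.

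The combinatorial core is then to run an Erd\H{o}s--Rado / free-set style argument on $M \cap X$ exactly as in the proof of Theorem~\ref{spread}, but now additionally using a fixed linear order and the coloring that records, for a pair $\{y, z\}$, whether $y \in V^z_{\alpha(z)}$, whether $z \in V^y_{\alpha(y)}$, and the pair $(\alpha(y), \alpha(z)) \in \kappa^2$; homogenizing on a set of size $(s(X)\cdot\kappa)^+$ would produce a discrete subset of $X$ of that size (the closures-missing-$x$ clause is what lets me separate the homogeneous set from $x$ and feed this back in), contradicting the definition of $s(X)$. Actually the cleaner route is: the set $\{V^y_{\alpha(y)} : y \in M \cap X\}$ is, after passing to an Erd\H{o}s--Rado-homogeneous subset, a point-separating-like family witnessing a discrete subspace; so $|M \cap X| \le 2^{s(X)\cdot\kappa} = \lambda$, which is consistent, so I instead need $M \cap X = X$ to get the contradiction with $x \notin \bigcup(\mathcal{U}\cap M)$. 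This is forced by the standard closing-off: one shows $X \subseteq M$ because otherwise a point outside $M$ together with the $G^c_\kappa$-covering sets and the linear order yields, via Erd\H{o}s--Rado on $\kappa$-many colors, a discrete set of size $(s(X)\cdot\kappa)^+$ in $X$ — which is impossible. The main obstacle, and the step I expect to require the most care, is exactly this last homogenization: setting up the coloring on pairs from a putative $(s(X)\cdot\kappa)^+$-sized set so that a monochromatic subset is genuinely discrete in $X$, mirroring the bookkeeping in Theorem~\ref{spread} while simultaneously keeping the chosen point $x$ separated so that the contradiction with the cover closes.
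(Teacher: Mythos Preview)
Your proposal has a genuine gap at its core. You eventually try to show $X \subseteq M$, but this would imply $|X| \leq |M| = 2^{s(X)\cdot\kappa}$, which is simply false in general: there is no bound on $|X|$ in terms of $s(X)$ and $\kappa$ alone (the Hajnal--Juh\'asz inequality needs $\psi(X)$, and $\kappa$ here is an arbitrary parameter unrelated to $\psi(X)$). So the ``standard closing-off'' you invoke cannot succeed. The Erd\H{o}s--Rado step you sketch is also unusable as stated: to extract a monochromatic set of size $(s(X)\cdot\kappa)^+$ from a $\kappa$-colouring of pairs you need a domain of size $(2^{s(X)\cdot\kappa})^+$, but $M \cap X$ has size at most $2^{s(X)\cdot\kappa}$ and a single point $x \notin M$ does not supply such a set. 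In Theorem~\ref{spread} the large set came for free from the hypothesis; here there is none.

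The paper's argument is structurally different and does not attempt $X \subseteq M$. It proves first that $\mathcal{F} \cap M$ covers $\overline{X \cap M}$, and then that $\mathcal{F} \cap M$ covers all of $X$. The tool replacing Erd\H{o}s--Rado is \emph{Shapirovskii's Lemma}: any open cover $\mathcal{O}$ of a set $A$ admits a discrete $D \subseteq A$ and a subfamily $\mathcal{U} \subseteq \mathcal{O}$ with $|\mathcal{U}| = |D|$ and $A \subseteq \overline{D} \cup \bigcup\mathcal{U}$. Since $s(X) \leq \kappa$ forces $|D| \leq \kappa$, both $D$ and $\mathcal{U}$ lie in $M$ by $\kappa$-closedness, and elementarity pushes the containment $X \subseteq \overline{D} \cup \bigcup\mathcal{U}$ from $M$ up to $H(\theta)$. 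For Claim~1 this traps the putative uncovered point $p$ in $\overline{D}$, after which the $G^c_\kappa$ structure and $\kappa$-closedness produce an element of $\mathcal{F} \cap M$ containing $p$; Claim~2 is a second application of the same lemma to $\overline{X \cap M}$. That is how the spread hypothesis is actually cashed in: it bounds the size of the discrete set Shapirovskii produces, not the size of a monochromatic set from a partition relation.
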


\begin{proof}
Without loss we can assume $s(X) \leq \kappa$. Fix a cover $\mathcal{F}$ of $X$ by $G^c_\kappa$ sets.

Let $\theta$ be a large enough regular cardinal and $M$ be a $\kappa$-closed elementary submodel of $H(\theta)$ such that $X, \mathcal{F} \in M$, $2^\kappa+1\subset M$ and $|M|=2^\kappa$.

For every $F \in \mathcal{F}$ choose open sets $\{U_\alpha(F): \alpha < \kappa\}$ witnessing that $F$ is a $G^c_\kappa$-set. Note that when $F \in \mathcal{F} \cap M$ we can assume that $\{U_\alpha(F): \alpha < \kappa \} \in M$ and hence $\{U_\alpha(F): \alpha < \kappa \} \subset M$.

\medskip

\noindent {\bf Claim 1.} $\mathcal{F} \cap M$ covers $\overline{X \cap M}$. 

\begin{proof}[Proof of Claim 1] 
Suppose this is not true and let $p \in \overline{X \cap M} \setminus \bigcup (\mathcal{F} \cap M)$. For every $x \in X \cap M$ we can find $F_x \in \mathcal{F} \cap M$ such that $x \in F_x$. Moreover, there must be $\alpha(x) <\kappa$ such that $p \notin U_{\alpha(x)}(F_x)$. Now, $\mathcal{O}=\{U_{\alpha(x)}(F_x): x \in X \cap M\}$ is an open cover of $X \cap M$. By Shapirovskii's Lemma (see \cite{Ju}) there is a discrete set $D \subset X \cap M$ and a subcollection $\mathcal{U} \subset \mathcal{O}$ with $|\mathcal{U}|=|D| \leq \kappa$ such that $X \cap M \subset \overline{D} \cup \bigcup \mathcal{U}$. By $\kappa$ closedness of $M$ we have $D, \mathcal{U} \in M$ hence $M \models X \subset \overline{D} \cup \bigcup \mathcal{U}$. Therefore by elementarity $H(\theta) \models X \subset \overline{D} \cup \bigcup \mathcal{U}$. Since $p \notin \bigcup \mathcal{U}$ we must have $p \in \overline{D}$.

Let now $F$ be an element of $\mathcal{F}$ such that $p \in F$. We have $p \in \overline{U_\alpha(F) \cap D}$ for every $\alpha < \kappa$ and $\overline{U_\alpha(F) \cap D} \in M$, by $\kappa$-closedness of $M$. Define $B=\bigcap \{\overline{U_\alpha(F) \cap D}: \alpha < \kappa \}$. Then $B \in M$. Note that we have $H(\theta) \models (\exists G \in \mathcal{F})(B \subset G)$, hence by elementarity $M \models (\exists G \in \mathcal{F})(B \subset G)$, which implies the existence of $H \in \mathcal{F} \cap M$ such that $p \in B \subset H$. But this contradicts the fact that $p \notin \bigcup (\mathcal{F} \cap M)$. Hence $\mathcal{F} \cap M$ covers $\overline{X \cap M}$ and the claim is proved.
\renewcommand{\qedsymbol}{$\triangle$}

\end{proof}

\noindent {\bf Claim 2.} $\mathcal{F} \cap M$ covers $X$.

\begin{proof}[Proof of Claim 2] Suppose this is not true and let $p$ be a point of $X \setminus \bigcup (\mathcal{F} \cap M)$. For every $F \in \mathcal{F} \cap M$ we can find $\beta(F) <\kappa$ such that $p \notin U_{\beta(F)}(F)$. 

It follows from Claim 1 that the family $\mathcal{V}:=\{U_{\beta(F)}(F): F \in \mathcal{F} \cap M\}$ is an open cover of $\overline{X \cap M}$. By Shapirovskii's Lemma we can find a discrete $D \subset \overline{X \cap M}$ and a family $\mathcal{W} \subset \mathcal{V}$ such that $|\mathcal{W}|=|D|<\kappa$ and $X \cap M \subset \overline{X \cap M} \subset \overline{D} \cup \bigcup \mathcal{W}$. Note that $D, \mathcal{W} \in M$, by $<\kappa$-closedness of $M$. This implies that $M \models X \subset \overline{D} \cup \bigcup \mathcal{W}$ and hence $H(\theta) \models X \subset \overline{D} \cup \bigcup \mathcal{W}$ by elementarity. But this is a contradiction because $p \notin W$, for every $W \in \mathcal{W}$ and since $\overline{D} \subset \overline{X \cap M}$ we also have that $p \notin \overline{D}$.
\renewcommand{\qedsymbol}{$\triangle$}
\end{proof}

Since $|M| \leq 2^\kappa$ it follows that $\mathcal{F} \cap M$ is a $2^\kappa$-sized subfamily of $\mathcal{F}$ covering $X$ and hence we are done.

\end{proof}

It's not possible to replace $X^c_\delta$ with $X_\delta$ in the above result, as the following example shows.

\begin{example}
There are $T_1$ spaces $X$ of countable spread where $L(X_\delta)$ can be arbitrarily large.
\end{example}

\begin{proof}
Let $\kappa$ be a cardinal of uncountable cofinality and $\mu=cf(\kappa)$.  Define a topology on $X=\kappa$ by declaring sets of the form $[0, \alpha] \setminus F$ to be a base, where $\alpha$ is an ordinal less than $\kappa$ and $F$ is a finite set. It is easy to see that $s(X)=\omega$. Moreover $\{[0, \alpha]: \alpha < \kappa \}$ is an open cover of $X$ without subcovers of cardinality less than $\mu$ and hence $L(X_\delta)\geq \mu$.
\end{proof}

However, for regular spaces, the $G_\delta$ modification and the $G^c_\delta$ modification coincide, so we obtain the following result:

\begin{theorem}
Let $X$ be a regular space. Then $L(X_\kappa) \leq 2^{s(X) \cdot \kappa}$.
\end{theorem}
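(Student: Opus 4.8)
The plan is to derive this statement as an immediate consequence of Theorem \ref{mainthm} together with the remark, made just before the statement, that $X_\kappa = X^c_\kappa$ whenever $X$ is regular. Once we know that the $G_\kappa$ topology and the $G^c_\kappa$ topology on a regular space $X$ coincide, Theorem \ref{mainthm} gives $L(X_\kappa) = L(X^c_\kappa) \leq 2^{s(X) \cdot \kappa}$ and there is nothing more to do.

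So the only point that requires an argument is the equality $X_\kappa = X^c_\kappa$ for regular $X$. One inclusion is trivial, since every $G^c_\kappa$ set is in particular a $G_\kappa$ set, so the $G^c_\kappa$ topology is coarser than the $G_\kappa$ topology. For the reverse inclusion I would show that every $G_\kappa$ set is a union of $G^c_\kappa$ sets. Fix a $G_\kappa$ set $G = \bigcap_{\alpha < \kappa} U_\alpha$ with each $U_\alpha$ open, and fix $p \in G$. For each $\alpha < \kappa$, use regularity to build recursively a decreasing sequence of open sets $(V_{\alpha,n})_{n<\omega}$ with $p \in V_{\alpha, 0}$, $\overline{V_{\alpha,0}} \subseteq U_\alpha$, and $\overline{V_{\alpha, n+1}} \subseteq V_{\alpha, n}$ for all $n$. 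Then $\bigcap_{n<\omega} V_{\alpha,n} = \bigcap_{n<\omega} \overline{V_{\alpha,n}}$, so the set $H := \bigcap_{\alpha<\kappa} \bigcap_{n<\omega} V_{\alpha,n}$ is a $G^c_{\kappa \cdot \omega} = G^c_\kappa$ set (here we use that $\kappa$ is infinite; the finite case is trivial, since then $X_\kappa = X$ and $2^{s(X)\cdot\kappa} = 2^{s(X)}$ already bounds $L(X)$) satisfying $p \in H \subseteq G$. Hence $G$ is a union of $G^c_\kappa$ sets, as desired.

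I do not expect any real obstacle here: the content of the bound is entirely contained in Theorem \ref{mainthm}, and the passage from $X^c_\kappa$ to $X_\kappa$ is the standard, elementary use of regularity to shrink an open set together with its closure. The one thing to keep an eye on is the cardinal arithmetic $\kappa \cdot \omega = \kappa$, which is what lets us run the countable recursion inside the outer intersection of length $\kappa$ without inflating the index cardinal.
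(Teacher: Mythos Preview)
Your approach is exactly the paper's: the theorem is stated immediately after the remark that for regular spaces the $G_\kappa$ and $G^c_\kappa$ modifications coincide, and is left as a direct consequence of Theorem~\ref{mainthm}. You supply more detail than the paper (which calls the equality ``obvious''); the only tiny wrinkle is that in your recursion you should also record $p \in V_{\alpha,n+1}$ at each step, so that $p \in H$ is guaranteed.
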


Recall that the tightness of a point $x$ in the space $X$ ($t(x,X)$) is defined as the minimum cardinal $\kappa$ such that for every subset $A$ of $X$ with $x \in \overline{A}\setminus A$ there is a subset $B \subset A$ such that $|B| \leq \kappa$ and $x \in \overline{B}$. The tightness of the space $X$ is then defined as $t(X)=\sup \{t(x,X): x \in X\}$. A space of countable tightness is also called \emph{countably tight}.

\begin{theorem} \label{ccharthm}
Let $X$ be a countably compact space with a dense set of points of countable character. Then $wL(X^c_\kappa) \leq 2^{t(X) \cdot wL_c(X) \cdot \kappa}$.
\end{theorem}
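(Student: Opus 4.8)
The plan is to mimic the elementary submodel argument of Theorem~\ref{mainthm}, with the spread replaced by $t(X)\cdot wL_c(X)$ and ``covers'' weakened to ``has dense union''. Put $\lambda=t(X)\cdot wL_c(X)\cdot\kappa$ and, as usual, assume $t(X)\le\kappa$ and $wL_c(X)\le\kappa$, so $\lambda=\kappa$ and we must show $wL(X^c_\kappa)\le 2^\kappa$. Given an open cover of $X^c_\kappa$, refine it to a cover $\mathcal{F}$ of $X$ by $G^c_\kappa$ sets; since choosing, above each member of a subfamily $\mathcal{V}\subset\mathcal{F}$, an element of the original cover yields a subfamily of the original cover of the same cardinality whose union contains $\bigcup\mathcal{V}$---hence is $X^c_\kappa$-dense whenever $\bigcup\mathcal{V}$ is---it suffices to find $\mathcal{V}\subset\mathcal{F}$ with $|\mathcal{V}|\le 2^\kappa$ and $\overline{\bigcup\mathcal{V}}^{\,X^c_\kappa}=X$. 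Fix a dense $D\subset X$ of points of countable character. Let $M\prec H(\theta)$ ($\theta$ large and regular) be $\kappa$-closed with $X,\mathcal{F},D\in M$, $2^\kappa+1\subset M$ and $|M|=2^\kappa$. For each $F\in\mathcal{F}$ choose open $\{U_\alpha(F):\alpha<\kappa\}$ with $F=\bigcap_\alpha U_\alpha(F)=\bigcap_\alpha\overline{U_\alpha(F)}$, taken in $M$ when $F\in M$, and for each $p\in D$ choose a decreasing countable neighbourhood base $\{V_n(p):n\in\omega\}$, taken in $M$ when $p\in M$; as in Theorem~\ref{mainthm}, $F\in\mathcal{F}\cap M$ then forces $\{U_\alpha(F):\alpha<\kappa\}\subset M$ and $p\in D\cap M$ forces $\{V_n(p):n\in\omega\}\subset M$. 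I claim $\mathcal{V}:=\mathcal{F}\cap M$ works.

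The tool replacing Shapirovskii's Lemma will be a weakly Lindel\"of, tightness-flavoured analogue: from an open cover of a set $Y\subseteq X$ one extracts a subfamily $\mathcal{O}'$ of size at most $wL_c(X)$ and a set $E$ of size at most $t(X)\cdot wL_c(X)$ with $\overline{Y}\subseteq\overline{E\cup\bigcup\mathcal{O}'}$; the \emph{closed} variant $wL_c$ appears because it is applied to closed sets such as $\overline{X\cap M}$. Granting this, one runs two claims in the manner of Theorem~\ref{mainthm}: that $\bigcup(\mathcal{F}\cap M)$ is $X^c_\kappa$-dense in $\overline{X\cap M}$ (where $X$-density is immediate from $X\cap M\subseteq\bigcup(\mathcal{F}\cap M)$, so the content lies entirely in the finer topology), and then that it is $X^c_\kappa$-dense in $X$. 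In each, the ``closing-off'' half is routine: assuming a bad point $p$, for $x\in X\cap M$ pick $F_x\in\mathcal{F}\cap M$ with $x\in F_x$ and, since $p\notin F_x$, an index $\beta(x)$ with $p\notin\overline{U_{\beta(x)}(F_x)}$; then reflect a small subfamily of $\{U_{\beta(x)}(F_x):x\in X\cap M\}$ and a small set back into $M$ by $\kappa$-closedness and invoke elementarity. The familiar nuisance that the closure of a union of $\kappa$ closed sets need not be the union of the closures would be handled, if necessary, by passing to an increasing chain of submodels, exactly as in the Bell--Ginsburg--Woods bound $|X|\le 2^{\chi(X)\cdot wL_c(X)}$.

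The step I expect to be the real obstacle---and the only one using the hypotheses---is promoting ``dense in $X$'' to ``dense in $X^c_\kappa$'', a genuine gap since $X^c_\kappa$ is strictly finer. Here one exploits that the points of $D$ are ``small'' in $X^c_\kappa$ and that $X$ is countably compact: if $\bigcup(\mathcal{F}\cap M)$ were not $X^c_\kappa$-dense, fix a nonempty $G^c_\kappa$ set $N=\bigcap_{\alpha<\kappa}\overline{N_\alpha}$ disjoint from it, hence disjoint from $X\cap M$; choosing $d_\alpha\in D\cap N_\alpha$ and taking an $\omega$-accumulation point $q$ of a countable subset of $\{d_\alpha:\alpha<\kappa\}$, countable compactness puts $q$ in $\bigcap_\alpha\overline{N_\alpha}=N$, and the role of the countable-character hypothesis is to let the accumulating sequences---and the closed set of all their accumulation points---be arranged to lie in $M$: that closed set is then a nonempty $M$-definable subset of $N$, and since $M$ sees that $\mathcal{F}$ covers $X$ it sees a member of $\mathcal{F}$ meeting it, contradicting that $N$ misses $\bigcup(\mathcal{F}\cap M)$. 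Making this reflection precise, and interleaving it with the closing-off of the two claims, is where the argument genuinely has to be done; the remainder is a routine adaptation of the proof of Theorem~\ref{mainthm}.
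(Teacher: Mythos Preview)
Your outline leaves the decisive step undone, and the sketch you offer for it does not work. If $N$ is a $G^c_\kappa$ set disjoint from $\bigcup(\mathcal F\cap M)$ then $N\cap X\cap M=\emptyset$, so there is no hope of producing a nonempty $M$-definable subset of $N$; your points $d_\alpha\in D\cap N_\alpha$ have no reason to lie in $M$, and an accumulation point of a countable subset of them need not land in $\bigcap_\alpha\overline{N_\alpha}$ anyway, since $d_\beta\in N_\beta$ says nothing about $d_\beta\in N_\alpha$ for $\alpha\neq\beta$. The countable-character hypothesis is not used to push sequences into $M$.

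The paper organises the argument differently. Its Claim~1 is that $\mathcal F\cap M$ actually \emph{covers} $\overline{X\cap M}$ (not merely is dense there), and tightness is used directly: for $x\in\overline{X\cap M}$ pick $A\in[X\cap M]^{\le\kappa}$ with $x\in\overline A$, so $A\in M$; if $x\in F\in\mathcal F$ then $B:=\bigcap_\alpha\overline{A\cap U_\alpha(F)}$ lies in $M$, contains $x$, and sits inside $F$, so reflecting $(\exists G\in\mathcal F)(B\subset G)$ yields $G\in\mathcal F\cap M$ with $x\in G$. Claim~2 then shows $\bigcup(\mathcal F\cap M)$ is dense in $X$, and countable character is used at the \emph{bad} point: choose $p$ in the open set $X\setminus\overline{\bigcup(\mathcal F\cap M)}$ with a countable local base $\{V_n\}$; for each $x\in\overline{X\cap M}$ the chosen witness $V^x_\alpha\in M$ with $p\notin\overline{V^x_\alpha}$ misses some $V_{n_x}$, so the sets $\bigcup\{V^x_\alpha:n_x=n\}$ form a countable open cover of the countably compact set $\overline{X\cap M}$; finitely many suffice, so all surviving $V^x_\alpha$ miss a fixed $V_k$, and $wL_c(X)\le\kappa$ gives a $\kappa$-sized $\mathcal V\in M$ with $\overline{X\cap M}\subset\overline{\bigcup\mathcal V}$ and $V_k\cap\bigcup\mathcal V=\emptyset$, which reflects to $X\subset\overline{\bigcup\mathcal V}$, a contradiction. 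This handles the ``closure of a union'' difficulty in one stroke---no chain of submodels is needed. (Your observation that $X^c_\kappa$-density is in principle stronger than $X$-density is correct; the paper argues only the latter.)
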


\begin{proof}
Without loss of generality we can assume that $wL_c(X) \cdot t(X) \leq \kappa$. Fix a cover $\mathcal{F}$ of $X$ by $G^c_\kappa$ sets

Let $\theta$ be a large enough regular cardinal and $M$ be a $\kappa$-closed elementary submodel of $H(\theta)$ such that $X, \mathcal{F} \in M$ and $|M|=2^\kappa$.

For every $F \in \mathcal{F}$ choose open sets $\{U_\alpha: \alpha < \kappa \}$ witnessing that $F$ is a $G^c_\kappa$ set.

\noindent {\bf Claim 1.} $\mathcal{F} \cap M$ covers $\overline{X \cap M}$. 

\begin{proof}[Proof of Claim 1] 
Let $x \in \overline{X \cap M}$ and use $t(X) \leq \kappa$ to fix a $\kappa$-sized set $A \subset X \cap M$ such that $x \in \overline{A}$. Note $A \in M$. Let $F \in \mathcal{F}$ be such that $x \in F$ and let $\{U_\alpha: \alpha < \kappa \}$ be a sequence of open sets witnessing that $F$ is a $G^c_\kappa$ set. 

Note that the set $B=\bigcap \{\overline{A \cap U_\alpha}: \alpha < \kappa \}$ is in $M$ and $x \in B \subset F$. Now $H(\theta) \models (\exists F \in \mathcal{F})(B \subset F)$. Hence $M \models (\exists F \in \mathcal{F})(B \subset F)$. Therefore we can find $G \in \mathcal{F} \cap M$ such that $x \in B \subset G$, which is what we wanted.
\renewcommand{\qedsymbol}{$\triangle$}
\end{proof}

\noindent {\bf Claim 2.} $\mathcal{F} \cap M$ has dense union in $X$.

\begin{proof}[Proof of Claim 2]
Suppose not and let $p \in X \setminus \overline{\bigcup \mathcal{F} \cap M}$ be a point of countable character. Fix a local base $\{V_n: n < \omega \}$ at $p$. 

For every $x \in \overline{X \cap M}$ pick $F_x \in \mathcal{F} \cap M$ such that $x \in F_x$ and let $\{V^x_\alpha: \alpha < \kappa \} \in M$ be a sequence of open sets witnessing that $F_x$ is a $G^c_\kappa$ set. Since $p \notin F_x$, there must be $\alpha < \kappa$ such that $p \notin \overline{V^x_\alpha}$. Hence there must be $n_x < \omega$ such that $V_{n_x} \cap V^x_\alpha = \emptyset$. let $U_n=\bigcup \{V^x_\alpha: n_x=n \}$. Then $\{U_n : n < \omega \}$ is a countable open cover of the countably compact space $\overline{X \cap M}$. So there is $k<\omega$ such that $\{U_n: n < k\}$ covers $\overline{X \cap M}$. Let now $\mathcal{U}=\{U^x_\alpha: n_x <k \}$. Then $\mathcal{U}$ covers $\overline{X \cap M}$, hence $wL_c(X) \leq \kappa$ implies the existence of $\mathcal{V} \in [\mathcal{U}]^\kappa$ such that $\overline{X \cap M} \subset \overline{\bigcup \mathcal{V}}$. But that implies $M \models X \subset \overline{\bigcup \mathcal{V}}$ and hence $H(\theta) \models X \subset \overline{\bigcup \mathcal{V}}$, which contradicts $V_k \cap (\bigcup \mathcal{V})=\emptyset$.
\renewcommand{\qedsymbol}{$\triangle$}
\end{proof}

\end{proof}

\begin{corollary}
(Alas, \cite{Al}) Let $X$ be a countably compact $T_2$ space with a dense set of points of countable character. Then $|X| \leq 2^{\psi_c(X) t(X) wL_c(X)}$.
\end{corollary}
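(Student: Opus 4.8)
The plan is to deduce this from Theorem \ref{ccharthm} by choosing $\kappa$ large enough that the $G^c_\kappa$ modification of $X$ becomes discrete. Set $\kappa = \psi_c(X) \cdot t(X) \cdot wL_c(X)$. The first step is to check that every singleton $\{x\}$ is a $G^c_\kappa$ subset of $X$. Since $X$ is Hausdorff and $\psi_c(X) \leq \kappa$, there is a family $\{V_\alpha : \alpha < \kappa\}$ of open neighbourhoods of $x$ with $\bigcap_{\alpha<\kappa} \overline{V_\alpha} = \{x\}$. Then $\{x\} \subseteq \bigcap_{\alpha<\kappa} V_\alpha \subseteq \bigcap_{\alpha<\kappa} \overline{V_\alpha} = \{x\}$, so $\{x\} = \bigcap_{\alpha<\kappa} V_\alpha = \bigcap_{\alpha<\kappa} \overline{V_\alpha}$ witnesses that $\{x\}$ is a $G^c_\kappa$ set. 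Hence every singleton is open in $X^c_\kappa$, i.e. $X^c_\kappa$ is the discrete topology on the underlying set of $X$.

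Next I would apply Theorem \ref{ccharthm} directly. The space $X$ is countably compact and has a dense set of points of countable character, so the theorem gives $wL(X^c_\kappa) \leq 2^{t(X) \cdot wL_c(X) \cdot \kappa}$. Since $\kappa \geq t(X) \cdot wL_c(X)$, the exponent is just $\kappa$, so $wL(X^c_\kappa) \leq 2^\kappa$.

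Finally I would observe that for an infinite discrete space $D$ one has $wL(D) = |D|$: given the open cover of $D$ by singletons, any subfamily $\mathcal{V}$ satisfies $\overline{\bigcup \mathcal{V}} = \bigcup \mathcal{V}$, so $\overline{\bigcup \mathcal{V}} = D$ forces $\bigcup \mathcal{V} = D$ and hence $|\mathcal{V}| = |D|$. Applying this to the discrete space $X^c_\kappa$ (the case of finite $X$ being trivial) yields $|X| = wL(X^c_\kappa) \leq 2^\kappa = 2^{\psi_c(X) \cdot t(X) \cdot wL_c(X)}$, which is the desired bound.

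I do not expect a serious obstacle in this argument: it is essentially a substitution into Theorem \ref{ccharthm}. The only two points that need a line of justification are the verification that the closed pseudocharacter hypothesis makes singletons $G^c_\kappa$ sets — so that $X^c_\kappa$ is discrete — and the elementary identity $wL(X^c_\kappa) = |X|$ for that discrete space; both are routine.
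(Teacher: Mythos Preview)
Your proposal is correct and is precisely the intended derivation: the paper states the result as an immediate corollary of Theorem~\ref{ccharthm}, and the analogous explicitly-written corollary (Hajnal--Juh\'asz after Theorem~\ref{spread}) uses exactly your trick of choosing $\kappa$ large enough to make the modified topology discrete, so that the bound on the relevant cardinal invariant becomes a bound on $|X|$.
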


\begin{corollary}
Let $X$ be a regular countably compact space with a dense set of points of countable character. Then $wL(X_\kappa) \leq 2^{wL_c(X) \cdot t(X) \cdot \kappa}$.
\end{corollary}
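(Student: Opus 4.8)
The final statement is the corollary deriving a weak Lindel\"of bound for $X_\kappa$ from Theorem~\ref{ccharthm}, for regular countably compact spaces with a dense set of points of countable character. The plan is to reduce immediately to the $G^c_\kappa$ version, since for regular spaces the two topologies agree.

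\medskip

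The approach in detail: Since $X$ is regular, every $G^c_\kappa$ set is a $G_\kappa$ set and conversely every $G_\kappa$ set $G=\bigcap_{\alpha<\kappa}U_\alpha$ can be rewritten as a $G^c_\kappa$ set by using regularity to shrink each $U_\alpha$ — for each point $x\in G$ and each $\alpha$ pick an open $W^x_\alpha$ with $x\in W^x_\alpha\subset\overline{W^x_\alpha}\subset U_\alpha$, and replace the original family by the (at most $\kappa$-sized, after reindexing) collection of these closures; this is exactly the observation already recorded in the excerpt that ``if $X$ is regular, then $X_\kappa=X^c_\kappa$.'' Hence $X_\kappa$ and $X^c_\kappa$ are literally the same topological space, so $wL(X_\kappa)=wL(X^c_\kappa)$. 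Now apply Theorem~\ref{ccharthm}: the hypotheses (countably compact, dense set of points of countable character) are exactly those assumed here, so $wL(X^c_\kappa)\leq 2^{t(X)\cdot wL_c(X)\cdot\kappa}$. Finally observe $wL_c(X)\leq wL_c(X)$ trivially, and one should note that $wL_c(X)$ (the closed weak Lindel\"of number, the weak Lindel\"of number computed over closed covers, or equivalently relative to closed subspaces) may be silently bounded by $wL(X)$ or simply carried through; in the statement of the corollary the cardinal $wL_c(X)$ appears on the right-hand side, so no further estimate is even needed and the bound transfers verbatim.

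\medskip

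So the proof is essentially one line: $wL(X_\kappa)=wL(X^c_\kappa)\leq 2^{wL_c(X)\cdot t(X)\cdot\kappa}$, where the equality uses regularity and the inequality is Theorem~\ref{ccharthm}. There is no real obstacle; the only point requiring a moment's care is making sure the equivalence $X_\kappa=X^c_\kappa$ for regular $X$ is invoked correctly (it is stated without proof in the excerpt, so I would just cite it), and confirming that the cardinal functions $t$, $wL_c$ appearing in Theorem~\ref{ccharthm} are the same ones named in the corollary. Everything else is a direct substitution, exactly paralleling how the earlier ``$L(X_\kappa)\leq 2^{s(X)\cdot\kappa}$ for regular $X$'' theorem was deduced from Theorem~\ref{mainthm}.
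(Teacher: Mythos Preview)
Your approach is correct and is exactly the one the paper intends: the corollary is stated without proof precisely because it follows from Theorem~\ref{ccharthm} together with the already-recorded fact that $X_\kappa=X^c_\kappa$ for regular $X$, and you invoke both in the right order. One small inaccuracy in your parenthetical: it is not true that every $G_\kappa$ set in a regular space \emph{is} a $G^c_\kappa$ set (a $G^c_\kappa$ set is always closed, being an intersection of closures); what regularity gives is that the $G^c_\kappa$ sets form a \emph{base} for the $G_\kappa$ topology, which suffices for $X_\kappa=X^c_\kappa$ as topological spaces and hence for $wL(X_\kappa)=wL(X^c_\kappa)$.
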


\begin{corollary}
Let $X$ be a normal countably compact space with a dense set of points of countable character. Then $wL(X_\kappa) \leq 2^{wL(X) \cdot t(X) \cdot \kappa}$.
\end{corollary}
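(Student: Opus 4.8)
The plan is to deduce this from the corollary for regular countably compact spaces proved just above, together with the observation that normality forces $wL_c$ to collapse onto $wL$. A normal $T_1$ space is regular, so the previous corollary immediately gives $wL(X_\kappa) \leq 2^{wL_c(X) \cdot t(X) \cdot \kappa}$; hence the whole problem reduces to checking that $wL_c(X) = wL(X)$ when $X$ is normal. Recall that $wL_c(X)$ is the supremum of $wL(F)$ taken over all closed subspaces $F$ of $X$; since $X$ is a closed subspace of itself, only the inequality $wL_c(X) \leq wL(X)$ needs proof, i.e.\ one must show $wL(F) \leq wL(X)$ for every closed $F \subseteq X$.

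The heart of the matter is a standard shrinking argument, and this is the only place normality enters. Put $\kappa = wL(X)$ and fix a closed set $F \subseteq X$ and a family $\mathcal{U}$ of open subsets of $X$ with $F \subseteq G := \bigcup \mathcal{U}$. Using normality to separate the disjoint closed sets $F$ and $X \setminus G$, pick an open set $W$ with $F \subseteq W \subseteq \overline{W} \subseteq G$. Then $\mathcal{U} \cup \{X \setminus \overline{W}\}$ is an open cover of $X$, so $wL(X) \leq \kappa$ yields $\mathcal{V}' \in [\mathcal{U} \cup \{X \setminus \overline{W}\}]^{\leq \kappa}$ with $X \subseteq \overline{\bigcup \mathcal{V}'}$. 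Set $\mathcal{V} = \mathcal{V}' \cap \mathcal{U}$, so that $\bigcup \mathcal{V}' \subseteq (\bigcup \mathcal{V}) \cup (X \setminus \overline{W})$. I claim $F \subseteq W \subseteq \overline{\bigcup \mathcal{V}}$: given $p \in W$, every neighbourhood of $p$ may be intersected with $W$, hence made to miss $X \setminus \overline{W}$; since it still meets $(\bigcup \mathcal{V}) \cup (X \setminus \overline{W})$, it must meet $\bigcup \mathcal{V}$, so $p \in \overline{\bigcup \mathcal{V}}$. Therefore $wL(F) \leq |\mathcal{V}| \leq \kappa$, which is exactly what is needed, and the corollary follows.

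I do not anticipate a genuine obstacle here; the argument is short and elementary. The one point requiring care is the treatment of the boundary points of $F$: discarding the padding set $X \setminus \overline{W}$ from $\mathcal{V}'$ is harmless precisely because we first fattened $F$ to an open $W$ whose closure still lies inside $\bigcup \mathcal{U}$, so no point of $F$ can have all its neighbourhoods' mass absorbed by $X \setminus \overline{W}$. It is worth remarking that normality is essential for the equality $wL_c(X) = wL(X)$: a Mrów​ka $\Psi$-space is (Tychonoff and) separable, hence weakly Lindel\"of, yet contains an uncountable closed discrete subspace, so its $wL_c$ is strictly and in fact wildly larger than its $wL$.
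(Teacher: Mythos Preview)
Your argument is correct and matches the paper's intended route: the corollary is stated immediately after the regular case with no separate proof, so the implicit argument is exactly ``normal $\Rightarrow$ regular, then use $wL_c(X)=wL(X)$ for normal $X$,'' which is precisely what you do. One small quibble: your sentence ``$wL_c(X)$ is the supremum of $wL(F)$ over closed $F$'' is not quite the definition the paper is using (the paper's $wL_c$ takes closures in $X$, not in $F$), but this is harmless since your actual shrinking argument proves the right inequality for the paper's version of $wL_c$ directly.
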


In a similar way we can prove the following theorem:

\begin{theorem} \label{isthm}
Let $X$ be a space with a dense set of isolated points. Then $wL(X^c_\kappa) \leq 2^{wL_c(X) \cdot t(X) \cdot \kappa}$.
\end{theorem}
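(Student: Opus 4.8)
The plan is to mimic the proof of Theorem~\ref{ccharthm}, replacing the appeal to countable compactness (used there to extract a finite subcover of the cover $\{U_n : n<\omega\}$) with an argument that works when instead $X$ has a dense set of isolated points. First I would set up the same machinery: without loss of generality assume $wL_c(X) \cdot t(X) \leq \kappa$, fix a cover $\mathcal{F}$ of $X$ by $G^c_\kappa$ sets, and take a $\kappa$-closed elementary submodel $M \prec H(\theta)$ with $X, \mathcal{F} \in M$ and $|M| = 2^\kappa$. For each $F \in \mathcal{F}$ fix open sets $\{U_\alpha(F) : \alpha < \kappa\}$ witnessing $F = \bigcap_\alpha U_\alpha(F) = \bigcap_\alpha \overline{U_\alpha(F)}$, and as before note that when $F \in \mathcal{F} \cap M$ we may take this sequence to lie in $M$.

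The first claim, that $\mathcal{F} \cap M$ covers $\overline{X \cap M}$, should go through verbatim as in Theorem~\ref{ccharthm}: given $x \in \overline{X \cap M}$, use $t(X) \leq \kappa$ to pick a $\kappa$-sized $A \subset X \cap M$ with $x \in \overline{A}$ (so $A \in M$ by $\kappa$-closedness), choose $F \in \mathcal{F}$ with $x \in F$, form $B = \bigcap_\alpha \overline{A \cap U_\alpha(F)} \in M$, observe $x \in B \subset F$, and pull the statement $(\exists F \in \mathcal{F})(B \subset F)$ down to $M$ by elementarity to land inside $\mathcal{F} \cap M$. The second claim is that $\mathcal{F} \cap M$ has dense union in $X$. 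Suppose not, and pick $p \in X \setminus \overline{\bigcup(\mathcal{F} \cap M)}$. Here is where the hypothesis changes: instead of assuming $p$ has countable character, I would use that the isolated points are dense, so it suffices to derive a contradiction from the existence of a single isolated point $q$ outside $\overline{\bigcup(\mathcal{F} \cap M)}$ — indeed if $\overline{\bigcup(\mathcal{F}\cap M)} \neq X$ then, since $X \setminus \overline{\bigcup(\mathcal{F}\cap M)}$ is a nonempty open set, it contains an isolated point $q$, and $\{q\}$ is then a clopen-from-the-left obstruction. For each $x \in \overline{X \cap M}$ pick $F_x \in \mathcal{F} \cap M$ with $x \in F_x$ and a witnessing sequence $\{V^x_\alpha : \alpha < \kappa\} \in M$; since $q \notin F_x$ there is $\alpha(x) < \kappa$ with $q \notin \overline{V^x_{\alpha(x)}}$, and because $\{q\}$ is open this simply says $q \notin V^x_{\alpha(x)}$. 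Thus $\mathcal{U} = \{V^x_{\alpha(x)} : x \in \overline{X \cap M}\}$ is an open cover of $\overline{X \cap M}$ by sets none of which contains $q$.

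Now $wL_c(X) \leq \kappa$ gives a subfamily $\mathcal{V} \in [\mathcal{U}]^{\leq \kappa}$ with $\overline{X \cap M} \subset \overline{\bigcup \mathcal{V}}$; by $\kappa$-closedness $\mathcal{V} \in M$, so $M \models X \subset \overline{\bigcup \mathcal{V}}$, whence $H(\theta) \models X \subset \overline{\bigcup \mathcal{V}}$ by elementarity. But $q \notin V$ for every $V \in \mathcal{V}$ and $\{q\}$ is open, so $q \notin \overline{\bigcup \mathcal{V}}$, a contradiction. This shows $\overline{\bigcup(\mathcal{F} \cap M)} = X$, and since $|\mathcal{F} \cap M| \leq |M| = 2^\kappa$ we conclude $wL(X^c_\kappa) \leq 2^\kappa = 2^{wL_c(X) \cdot t(X) \cdot \kappa}$. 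The main point to get right — and where the argument genuinely diverges from Theorem~\ref{ccharthm} — is the reduction of the "bad" point to an isolated one: countable compactness is what let Juh\'asz-style arguments squeeze a countable cover down to a finite one, and here the clopenness of singletons at isolated points is the substitute that makes the final elementarity contradiction work without needing $p$ to have any character bound at all, and without needing the $\{V_n\}$ / countably-compact detour. I should double-check that the density of isolated points is used correctly, namely that $X \setminus \overline{\bigcup(\mathcal{F}\cap M)}$ being nonempty open forces it to meet the isolated points; that is immediate from density.
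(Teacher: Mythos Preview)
Your proof is correct and is precisely the adaptation the paper has in mind: the paper does not spell out a separate argument for Theorem~\ref{isthm} but simply writes ``In a similar way we can prove the following theorem'' after Theorem~\ref{ccharthm}. Your Claim~1 is verbatim the tightness argument from Theorem~\ref{ccharthm}, and your Claim~2 is exactly the intended simplification --- replacing the countable-character/countable-compactness detour (local base $\{V_n\}$, grouping by $n_x$, extracting a finite subcover) by the observation that a nonempty open complement must contain an isolated point $q$, whose open singleton then witnesses $q\notin\overline{\bigcup\mathcal{V}}$ directly.
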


\begin{question}
Is it true that $wL(X^c_\kappa) \leq 2^{wL_c(X) \cdot t(X) \cdot \kappa}$ for any Hausdorff space $X$?
\end{question}

We call a cover $\mathcal{U}$ of a space $X$, strongly point-separating if $\bigcap \{\overline{U}: U \in \mathcal{U} \wedge x \in U\}=\{x\}$. 

We define $psw_s(X)$ to be the least cardinal $\kappa$ such that $X$ admits a strongly point-separating open cover of order $\kappa$. Obviously $psw_s(X)=psw(X)$ for every regular space $X$.

\begin{theorem}
Let $X$ be a $T_2$ space. Then $L(X_\kappa) \leq psw_s(X)^{L(X) \cdot \kappa}$.
\end{theorem}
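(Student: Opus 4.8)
The plan is to run the elementary submodel argument of Theorem~\ref{mainthm}, but with $L(X)$ and the strongly point-separating cover playing the roles of $s(X)$ and Shapirovskii's Lemma. Put $\lambda=psw_s(X)$, $\mu=L(X)$ and $\nu=\mu\cdot\kappa$, and fix a strongly point-separating open cover $\mathcal{U}$ of $X$ of order $\leq\lambda$. The feature of $\mathcal{U}$ that matters is purely algebraic: for each $x$ the family $\mathcal{U}_x=\{U\in\mathcal{U}:x\in U\}$ has size $\leq\lambda$ and $\{x\}=\bigcap\mathcal{U}_x=\bigcap\{\overline{U}:U\in\mathcal{U}_x\}$ (any $y\in\bigcap\mathcal{U}_x$ lies in $\overline{U}$ for each such $U$, hence $y\in\{x\}$), so every singleton is a $G^c_\lambda$-set. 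Now fix an open cover $\mathcal{W}$ of $X_\kappa$; we may assume each $W\in\mathcal{W}$ has the form $W=\bigcap_{\alpha<\kappa}V^W_\alpha$ with $V^W_\alpha$ open in $X$, and we fix this assignment. Choose $\theta$ large and regular and an elementary submodel $M\prec H(\theta)$ that is $\nu$-closed, has size $\lambda^{\nu}$, contains $X$, $\mathcal{U}$, $\mathcal{W}$ and the assignment $W\mapsto\langle V^W_\alpha:\alpha<\kappa\rangle$, and satisfies $\lambda+1\subseteq M$ and $\nu+1\subseteq M$ (so in particular $\kappa\subseteq M$). By elementarity fix also $f\in M$ with $f\colon X\to\mathcal{W}$ and $z\in f(z)$ for all $z\in X$; then $f(z)\in\mathcal{W}\cap M$ whenever $z\in X\cap M$. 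Since $|\mathcal{W}\cap M|\leq\lambda^{\nu}$, it is enough to prove that $\mathcal{W}\cap M$ covers $X$.

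The heart of the proof is the claim that $\mathcal{W}\cap M$ covers $\overline{X\cap M}$, to be proved by contradiction as in Claim~1 of Theorem~\ref{mainthm}. Suppose $q\in\overline{X\cap M}$ is not covered, so $q\notin M$. For each $U\in\mathcal{U}_q$ one has $q\in U\cap\overline{X\cap M}\subseteq\overline{U\cap X\cap M}$, whence $\{q\}=\bigcap\{\overline{U\cap X\cap M}:U\in\mathcal{U}_q\}$; and for each $x\in X\cap M$ the set $f(x)\in\mathcal{W}\cap M$ contains $x$ but not $q$, so there is $\alpha(x)<\kappa$ with $q\notin V^{f(x)}_{\alpha(x)}=:O_x\in M$, the $O_x$ being open in $X$, lying in $M$, covering $X\cap M$, and missing $q$. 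From these data one wants to manufacture a set $B\in M$ with $q\in B\subseteq W$ for some $W\in\mathcal{W}$; reflecting the true statement $(\exists W\in\mathcal{W})(B\subseteq W)$ down to $M$ then yields $W\in\mathcal{W}\cap M$ with $q\in W$, contradicting the choice of $q$. To build $B$ inside $M$ one invokes $L(X)\leq\mu$ to thin $\{O_x:x\in X\cap M\}$ to a subfamily of size $\leq\mu$, then intersects the relevant coordinate-open sets with a fixed object of size $\leq\nu$ so that everything enters $M$ by $\nu$-closedness, and uses $\{\overline{U\cap X\cap M}:U\in\mathcal{U}_q\}$, whose intersection is $\{q\}$, to squeeze $B$ down to $q$; this is precisely the step that produces the exponent $\mu\cdot\kappa$ and the base $\lambda$.

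Granting the claim, suppose $p\in X\setminus\bigcup(\mathcal{W}\cap M)$, so $p\notin M$. For each $W\in\mathcal{W}\cap M$ pick $\beta(W)<\kappa$ with $p\notin V^W_{\beta(W)}$. Since $W\subseteq V^W_{\beta(W)}$ and $\mathcal{W}\cap M$ covers $\overline{X\cap M}$, the family $\{V^W_{\beta(W)}:W\in\mathcal{W}\cap M\}$ is an open cover of $\overline{X\cap M}$; as the latter is closed in $X$, $L(\overline{X\cap M})\leq\mu$, so there is $\mathcal{A}\in[\mathcal{W}\cap M]^{\leq\mu}$ with $\overline{X\cap M}\subseteq\bigcup\{V^W_{\beta(W)}:W\in\mathcal{A}\}$. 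By $\nu$-closedness $\mathcal{A}\in M$ and $(W\mapsto\beta(W))\upharpoonright\mathcal{A}\in M$, so $Y:=\bigcup\{V^W_{\beta(W)}:W\in\mathcal{A}\}$ is an open subset of $X$ that belongs to $M$ and contains $X\cap M$. If $X\setminus Y$ were nonempty it would be a nonempty element of $M$ and hence would meet $M$, contradicting $X\cap M\subseteq Y$; so $X\subseteq Y$. But $p\notin V^W_{\beta(W)}$ for every $W\in\mathcal{A}$, so $p\notin Y$, a contradiction. Hence $\mathcal{W}\cap M$ covers $X$, and we are done.

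The main obstacle is the claim, and within it the fact that $X\cap M$ need not be closed, so one cannot feed the cover $\{O_x:x\in X\cap M\}$ directly to $L(X)$: the strongly point-separating cover and Hausdorffness (through the closures $\overline{U}$) have to be used to locate each point of $\overline{X\cap M}$ within $M$'s reach, while keeping the constructed witness $B$ inside $M$ and still pinned to $q$. One also has to be somewhat careful with the ``$\lambda$-wide'' data $\mathcal{U}_q$, since $M$ is only guaranteed to be closed under sequences of length $\nu$; everything else is routine bookkeeping with elementary submodels.
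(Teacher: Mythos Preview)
Your Claim~2 is essentially the paper's Claim~2 and is fine. The problem is Claim~1: you describe what you \emph{want} to do but never actually construct the set $B\in M$, and the sketch you give cannot be completed as stated. The data you propose to use are $\{\overline{U\cap X\cap M}:U\in\mathcal{U}_q\}$ and a thinned family of the $O_x$'s. But $q\notin M$, so the members of $\mathcal{U}_q$ need not lie in $M$; the set $X\cap M$ need not lie in $M$ either; and even if these issues were resolved, $|\mathcal{U}_q|$ may be as large as $\lambda$, which can exceed $\nu$, so $\nu$-closedness of $M$ will not put the resulting intersection into $M$. (You explicitly flag this last point in your final paragraph, but it is precisely the non-routine step, not ``bookkeeping''.) Likewise, ``invoke $L(X)\leq\mu$ to thin $\{O_x:x\in X\cap M\}$'' does not work directly, since that family only covers $X\cap M$, not a closed set. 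In short, the $B$-construction from Theorem~\ref{mainthm} relied on Shapirovskii's lemma to produce a small object inside $M$; you have removed that input without replacing it.

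The paper's argument avoids this entirely by a preliminary refinement you are missing. Using $L(X)\leq\kappa$ and the fact that $\bigcap\{\overline{U}:U\in\mathcal{U}_x\}=\{x\}$, one first shrinks each member of the cover so that it is a $\kappa$-sized intersection of elements of $\mathcal{U}$. Now for $p\in\overline{X\cap M}$ choose $F=\bigcap_{\alpha<\kappa}U_\alpha\in\mathcal{F}$ with $p\in F$ and pick $x_\alpha\in U_\alpha\cap M$. Since $x_\alpha\in M$ and $\mathcal{U}\in M$, the set $\{U\in\mathcal{U}:x_\alpha\in U\}$ is an element of $M$ of size $\leq\lambda$, hence a subset of $M$; in particular $U_\alpha\in M$. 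By $\kappa$-closedness $F\in M$, which proves Claim~1 outright, with no need to manufacture an auxiliary $B$. This use of the bounded order of $\mathcal{U}$ to pull the individual $U_\alpha$'s into $M$ is the key idea, and it is what produces the base $\lambda$ in the bound.
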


\begin{proof}
Let $\lambda=psw_s(X)$ and fix a strongly point-separating open cover $\mathcal{U}$ of $X$ of order $\leq \lambda$. We can assume $L(X) \leq \kappa$. Let $\mathcal{F}$ be a $G_\kappa$ cover of $X$. Since $L(X) \leq \kappa$ we can assume that $\mathcal{F}$ is made up of $\kappa$-sized intersections of elements of $\mathcal{U}$. Let $M$ be a $\kappa$-closed elementary submodel of $H(\theta)$ such that $\lambda^\kappa \subset M$, $X, \mathcal{U}, \mathcal{F} \in M$ and $|M|=\lambda^\kappa$.

{\noindent \bf Claim 1.} $\mathcal{F} \cap M$ covers $\overline{X \cap M}$.

\begin{proof}[Proof of Claim 1]
Let $p \in \overline{X \cap M}$. Let $F \in \mathcal{F}$ be such that $p \in F$. Let $\{U_\alpha: \alpha <\kappa \} \subset \mathcal{U}$ be a family of open sets such that $F=\bigcap \{U_\alpha: \alpha < \kappa \}$. Let $x_\alpha$ be any point in $U_\alpha \cap M$. Note that for every $\alpha <\kappa$ we have that $\{U \in \mathcal{U} : x_\alpha \in U\}$ is an element of $M$ of cardinality $\lambda$. Therefore $\{U \in \mathcal{U}: x_\alpha \in U \} \subset M$ and hence $U_\alpha \in M$, for every $\alpha < \kappa$. By $\kappa$-closedness of $M$ we have $F=\bigcap \{U_\alpha: \alpha < \kappa \} \in M$, as we wanted.
\renewcommand{\qedsymbol}{$\triangle$}
\end{proof}

{\noindent \bf Claim 2.} $\mathcal{F} \cap M$ actually covers $X$.

\begin{proof}[Proof of Claim 2]
Suppose that is not true and let $p \in X \setminus \bigcup (\mathcal{F} \cap M)$. For every $x \in \overline{X \cap M}$, let $F_x \in \mathcal{F} \cap M$ such that $x \in F_x$ and let $\{U^x_\alpha: \alpha < \kappa \} \in M$ be a sequence of open sets such that $\bigcap \{U^x_\alpha: \alpha < \kappa \} =F_x$. We again have that $\{U^x_\alpha : \alpha < \kappa \} \subset M$ and hence, for every $x \in \overline{X \cap M}$ we can find an open neighbourhood $U_x \in M$ of $x$ such that $p \notin U_x$. The family $\mathcal{V}:=\{U_x: x \in \overline{X \cap M}\}$ is an open cover of the space $\overline{X \cap M}$, which has Lindel\"of number at most $\kappa$ and hence we can find $\mathcal{C} \in [\mathcal{V}]^\kappa$ such that $X \cap M \subset \overline{X \cap M} \subset \bigcup \mathcal{C}$. By $\kappa$-closedness of $M$ we have $\mathcal{C} \in M$ and hence the previous formula implies $M \models X \subset \bigcup \mathcal{C}$. By elementarity we get that $H(\theta) \models X \subset \bigcup \mathcal{C}$, which contradicts the fact that $p \notin \bigcup \mathcal{C}$.
\renewcommand{\qedsymbol}{$\triangle$}
\end{proof}
\end{proof}

\begin{corollary}
Let $X$ be a regular space. Then $L(X_\kappa) \leq psw(X)^{L(X) \cdot \kappa}$.
\end{corollary}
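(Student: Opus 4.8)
The plan is to deduce this immediately from the preceding theorem, exactly in the way the corollaries in this section are derived from their parent theorems. The statement to prove is: if $X$ is regular, then $L(X_\kappa) \leq psw(X)^{L(X) \cdot \kappa}$. The preceding theorem says that for any $T_2$ space $X$ one has $L(X_\kappa) \leq psw_s(X)^{L(X) \cdot \kappa}$, and the paper has already observed, right before that theorem, that $psw_s(X) = psw(X)$ for every regular space $X$.

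So the proof is essentially one line. First I would note that a regular space is in particular $T_2$ (recall the blanket assumption that all spaces are $T_1$, so regular $T_1$ gives Hausdorff), hence the previous theorem applies and yields $L(X_\kappa) \leq psw_s(X)^{L(X) \cdot \kappa}$. Then I would invoke the identity $psw_s(X) = psw(X)$, valid for regular spaces, to rewrite the right-hand side as $psw(X)^{L(X) \cdot \kappa}$, which is the desired bound.

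There is no real obstacle here; the only thing to be slightly careful about is making sure the hypotheses of the cited theorem are met, i.e. that ``regular'' suffices to give ``$T_2$'' in the ambient $T_1$ setting, and that the equality $psw_s = psw$ in the regular case is exactly what lets the $G^c_\kappa$-flavored quantity in the general theorem collapse to the classical point-separating weight. Both of these facts are stated in the excerpt, so the argument is complete as written.

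\begin{proof}
Since $X$ is regular and $T_1$, it is Hausdorff, so the previous theorem gives $L(X_\kappa) \leq psw_s(X)^{L(X) \cdot \kappa}$. As noted above, $psw_s(X) = psw(X)$ for every regular space, so $L(X_\kappa) \leq psw(X)^{L(X) \cdot \kappa}$.
\end{proof}
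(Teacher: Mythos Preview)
Your proposal is correct and matches the paper's approach: the corollary is stated in the paper without proof, since it follows immediately from the preceding theorem together with the observation (made just before that theorem) that $psw_s(X)=psw(X)$ for regular spaces.
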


\begin{question}
Is $t(X_\delta) \leq 2^{t(X)}$ true for every (compact) $T_2$ space $X$?
\end{question}

\section{An application to homogeneous compacta}

\begin{definition}
Let $X$ be a topological space. A set $S \subset X$ is called \emph{subseparable} if there is a countable set $C \subset X$ such that $S \subset \overline{C}$.
\end{definition}

Since $w(X) \leq 2^{d(X)}$ for every regular space $X$ and the weight is hereditary every subseparable subspace of a regular topological space has weight at most continuum.

\begin{lemma} \label{lemJV}
(Juh\'asz and van Mill, \cite{JM}) Let $X$ be a $\sigma$-countably tight homogeneous compactum. Then $X$ contains a non-empty subseparable $G_\delta$-subset and has a point of countable $\pi$-character.
\end{lemma}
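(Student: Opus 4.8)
The plan is to localise with the Baire category theorem, then build the subseparable $G_\delta$ by a countable recursion of shrinking open sets, and finally extract a point of countable $\pi$-character from Shapirovskii's theorem. I would not expect the $G_\delta$-topology bounds of Section~2 to help here --- the examples above show that $\pi$-character is much too wild under the $G_\delta$ modification --- so a direct construction inside $X$ seems unavoidable. Homogeneity will enter only at the very end, to promote ``some point'' to ``every point'' of countable $\pi$-character.

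First I would arrange that $X=\bigcup_{n<\omega}X_n$ with $X_n\subseteq X_{n+1}$ and each $X_n$ countably tight (a finite union of countably tight subspaces is countably tight). Since a compactum is Baire, $X=\bigcup_n\overline{X_n}$ yields an $N$ and a non-empty open set $V$ with $V\subseteq\overline{X_N}$, so $D:=X_N\cap V$ is a dense (because $V$ is open) countably tight subspace of $V$, hence also of the compactum $K:=\overline V$.

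The core step is to show that $K$ has a non-empty open set $W_0$ with separable closure --- equivalently, that some countable $C\subseteq D$ has $\overline C$ with non-empty interior in $K$. Granting this, one picks inside $W_0$ a decreasing sequence of non-empty open sets $W_0\supseteq\overline{W_1}\supseteq W_1\supseteq\overline{W_2}\supseteq\cdots$ (possible by regularity); then $G:=\bigcap_n W_n=\bigcap_n\overline{W_n}$ is non-empty, being an intersection of nested non-empty compacta, it is a $G_\delta$ of $X$, and $G\subseteq W_0\subseteq\overline C$, so $G$ is a non-empty subseparable $G_\delta$. To produce $W_0$ I would argue by contradiction: if every non-empty open subset of $K$ had non-separable closure, then, recursively choosing $d_\alpha\in D$ ($\alpha<\omega_1$) outside the (by hypothesis nowhere dense) closure of $\{d_\beta:\beta<\alpha\}$, together with separating open neighbourhoods, one builds a free sequence $\langle d_\alpha:\alpha<\omega_1\rangle$; a point lying in the closure of every tail $\{d_\beta:\beta\geq\alpha\}$ then witnesses, provided it can be forced to lie in $D$, that $D$ fails to be countably tight. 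Forcing that point into $D$ rather than merely into $K$ is the crux, and the only place where countable tightness of $D$ (as opposed to density) is really used; I expect this to be the main obstacle.

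The point of countable $\pi$-character is, I think, the fussier half. The natural route is to apply Shapirovskii's theorem --- every countably tight compactum has a point of countable $\pi$-character --- to a countably tight compactum produced along the way; a point $p$ of countable $\pi$-character in a closed $G_\delta$ subset $Y=\bigcap_n U_n=\bigcap_n\overline{U_n}$ of $X$ (regularity) promotes to one in $X$, since for a neighbourhood $N\ni p$ and a $\pi$-base element $B\cap Y\subseteq N$ of $Y$ (with $B$ open in $X$, chosen carefully) a compactness argument gives $n$ with $\overline{U_n}\cap\overline B\subseteq N$, making $U_n\cap B$ a non-empty open subset of $N$, so that the family of all such $U_n\cap B$ is a countable $\pi$-base at $p$ in $X$. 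The difficulty is that countable tightness is not closure-preserving, so one cannot simply close off $D$; I would expect to have to interleave the Baire localisation with the shrinking recursion so as to keep a closed countably tight set (with non-empty interior) in hand throughout, and that interleaving is the step I am least certain of. Once a point of countable $\pi$-character is found, homogeneity delivers one at every point.
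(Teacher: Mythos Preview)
The paper does not prove this lemma at all: it is quoted from Juh\'asz and van Mill \cite{JM} and used as a black box in the applications that follow. There is therefore no proof in the paper to compare your proposal against.

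As for the sketch itself, the overall architecture --- Baire localisation to get a dense countably tight piece inside a compact $K$, a free-sequence argument to force a separable open set, then Shapirovskii for the $\pi$-character --- is in the right spirit and close to what \cite{JM} actually does. But you have correctly flagged the real gap and not closed it: in the free-sequence step the complete accumulation point of $\langle d_\alpha:\alpha<\omega_1\rangle$ lies in $K$, not a priori in $D$, so countable tightness of $D$ does not immediately yield a contradiction. Resolving this is precisely the substance of the Juh\'asz--van Mill argument, and it requires more than what you have written (they work with free sequences more carefully, exploiting the full $\sigma$-decomposition rather than a single $X_N$). Similarly, for the $\pi$-character half you need a countably tight \emph{closed} $G_\delta$ to feed into Shapirovskii, and your sketch does not produce one; the ``interleaving'' you allude to is again where the actual work in \cite{JM} sits. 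So the proposal identifies the right obstacles but does not overcome them.
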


\begin{corollary}
Every $\sigma$-countably tight homogeneous compactum has character at most continuum.
\end{corollary}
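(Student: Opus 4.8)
The plan is to extract from Lemma~\ref{lemJV} a single point of small character and then spread this over the whole space using homogeneity. So let $X$ be a $\sigma$-countably tight homogeneous compactum. By Lemma~\ref{lemJV} there is a non-empty subseparable $G_\delta$-subset $S \subseteq X$; I would fix such an $S$, fix a (necessarily non-empty) countable $C \subseteq X$ with $S \subseteq \overline{C}$, and fix an arbitrary point $x \in S$. The goal is then to show $\chi(x,X) \leq \mathfrak{c}$.

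The first step is to bound the pseudocharacter of $x$ in $X$. Since $\overline{C}$ is a separable regular space we have $w(\overline{C}) \leq 2^{d(\overline{C})} \leq \mathfrak{c}$, and since weight is hereditary, $w(S) \leq \mathfrak{c}$ as well (this is exactly the remark recorded just before Lemma~\ref{lemJV}). In particular $\psi(x,S) \leq w(S) \leq \mathfrak{c}$, so we may choose open subsets $\{V_\alpha : \alpha < \mathfrak{c}\}$ of $X$ with $\bigcap_{\alpha<\mathfrak{c}}(V_\alpha \cap S) = \{x\}$. Writing $S = \bigcap_{n<\omega} W_n$ with each $W_n$ open in $X$, we obtain $\{x\} = \bigcap_{\alpha<\mathfrak{c}} V_\alpha \cap \bigcap_{n<\omega} W_n$, an intersection of at most $\mathfrak{c}$ many open subsets of $X$; hence $\psi(x,X) \leq \mathfrak{c}$.

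The second step is to upgrade this pseudocharacter bound to a character bound using compactness. Here I would invoke the classical fact that in a compact Hausdorff space $\chi(y,X) = \psi(y,X)$ for every $y$: given open sets $\{U_\alpha : \alpha < \lambda\}$ with $\bigcap_\alpha U_\alpha = \{y\}$, one uses regularity to shrink each $U_\alpha$ to an open $U'_\alpha$ with $y \in U'_\alpha \subseteq \overline{U'_\alpha} \subseteq U_\alpha$, and then the finite intersections of the $U'_\alpha$'s form a local base at $y$, because for any open $W \ni y$ the family $\{X \setminus \overline{U'_\alpha} : \alpha < \lambda\} \cup \{W\}$ covers $X$ and a finite subcover exhibits finitely many $U'_\alpha$'s whose intersection is contained in $W$. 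Applying this with $y = x$ and $\lambda = \mathfrak{c}$ gives $\chi(x,X) = \psi(x,X) \leq \mathfrak{c}$, and then homogeneity of $X$ yields $\chi(X) = \chi(x,X) \leq \mathfrak{c}$, which is the assertion.

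The genuinely hard work is entirely contained in Lemma~\ref{lemJV}, which we are taking as given; everything after it is essentially bookkeeping. The only point I would treat with a little care is the shrinking-to-a-local-base argument in the last step, but that is a standard compactness computation. It is perhaps worth remarking that the second conclusion of Lemma~\ref{lemJV} — the existence of a point of countable $\pi$-character — plays no role in this particular corollary: it is the subseparable $G_\delta$-set, together with the coincidence of pseudocharacter and character in a compactum, that does all the work.
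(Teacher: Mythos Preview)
Your argument is correct and follows essentially the same route as the paper's proof: use the subseparable $G_\delta$ set from Lemma~\ref{lemJV} to bound $\psi(x,X)$ by $\mathfrak{c}$ (via $w(S)\le\mathfrak{c}$ and the $G_\delta$ representation of $S$), upgrade to $\chi(x,X)\le\mathfrak{c}$ by compactness, and invoke homogeneity. The only cosmetic difference is that the paper applies homogeneity at the outset to place the subseparable $G_\delta$ around an arbitrary point, whereas you apply it at the end to transfer the bound from a single point to all of $X$.
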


\begin{proof}
Let $x \in X$ be any point. By homogeneity we can find a subseparable $G_\delta$ set $G$ containing $x$. Then $w(G) \leq 2^{\omega}$. So we can fix a continuum-sized family $\mathcal{U}$ of open neighbourhoods of $x$ such that $G \cap \bigcap \mathcal{U}_x=\{x\}$. Let $\{U_n: n < \omega \}$ be a countable family of open sets such that $G=\bigcap \{U_n: n < \omega \}$. Then $\mathcal{V}=\{U_n: n<\omega \} \cup \mathcal{U}$ is a continuum sized family of open subsets of $X$ such that $\bigcap \mathcal{V}=\{x\}$. Since $X$ is compact, this implies that $\chi(x,X) \leq 2^\omega$.
\end{proof}

\begin{theorem} \label{lemdense}
Let $X$ be a homogeneous compactum which is the union of countably many countably tight dense subspaces. Then $L(X_\delta) \leq 2^{\omega}$.
\end{theorem}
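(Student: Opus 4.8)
The plan is to apply Theorem~\ref{ccharthm} (the bound $wL(X^c_\kappa) \leq 2^{t(X) \cdot wL_c(X) \cdot \kappa}$ for countably compact spaces with a dense set of points of countable character), together with the Juh\'asz--van Mill machinery, and then bootstrap from a weak Lindel\"of bound to a genuine Lindel\"of bound using homogeneity and compactness. First I would record that since $X = \bigcup_{n<\omega} Y_n$ with each $Y_n$ dense and countably tight, the space $X$ itself is countably tight: any point $p$ lies in the closure of some $A$, and $A \cap Y_n$ is dense in a neighbourhood trace, so one can extract a countable subset of some $Y_n$ (hence of $X$) converging to $p$ — more carefully, $\sigma$-countable-tightness of a compactum implies countable tightness, a fact already implicit in the surrounding discussion. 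Also $X$ is compact Hausdorff, hence regular, so $X_\delta = X^c_\delta$.

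Next I would use Lemma~\ref{lemJV}: $X$ has a point of countable $\pi$-character, hence by homogeneity every point has countable $\pi$-character, so in particular $X$ has a dense set of points of countable $\pi$-character. I would like a \emph{dense set of points of countable character}; here is where I expect the main obstacle, since countable $\pi$-character does not give countable character directly. The repair is the Corollary proved just above: every $\sigma$-countably tight homogeneous compactum has character at most $\mathfrak{c}$, so $\chi(X) \leq 2^\omega$. Thus even without a dense set of points of countable character, the relevant hypothesis of Theorem~\ref{ccharthm} is only used to get a point (hence, by homogeneity, a dense set) where one can run the covering argument; re-examining that proof, what is actually needed is a dense set of points of character $\leq \kappa$ where $\kappa \geq t(X) \cdot wL_c(X)$. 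Taking $\kappa = \omega$ and noting $t(X) = \omega$, $wL_c(X) \leq wL(X) \leq L(X) \leq \chi(X)\cdot\ldots$ — actually $wL_c(X) = \omega$ since $X$ is compact — we would want to conclude $wL(X_\delta) = wL(X^c_\delta) \leq 2^\omega$, but the character bound $\mathfrak{c}$ rather than $\omega$ forces us to instead invoke the $psw$-based Theorem or argue that a subseparable $G_\delta$ has weight $\leq \mathfrak{c}$ and feed that in.

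The cleanest route, which I would actually carry out: by Lemma~\ref{lemJV} and homogeneity, every point $x$ of $X$ lies in a subseparable $G_\delta$-set $G_x$, and $w(G_x) \leq 2^\omega$ since subseparable subspaces of regular spaces have weight $\leq \mathfrak{c}$. Fix an open cover $\mathcal{G}$ of $X_\delta$. Take a $\kappa$-closed elementary submodel $M \prec H(\theta)$ with $\kappa = 2^\omega$, containing $X$, $\mathcal{G}$, the assignment $x \mapsto G_x$, and the $Y_n$'s, with $|M| = 2^\omega$ and $2^\omega + 1 \subset M$. Running the argument of Theorem~\ref{mainthm}/Theorem~\ref{ccharthm}: using countable tightness of $X$ (via the dense countably tight pieces $Y_n$, each of which meets $M$ in a dense-in-its-trace set), show $\mathcal{G} \cap M$ covers $\overline{X \cap M}$; then, for a putative $p \in X \setminus \bigcup(\mathcal{G}\cap M)$, use the countable $\pi$-character at $p$ together with countable compactness of $\overline{X\cap M}$ exactly as in Claim~2 of Theorem~\ref{ccharthm} to derive a contradiction. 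Since $|\mathcal{G} \cap M| \leq |M| = 2^\omega$, this yields a subcover of size $\leq 2^\omega$, i.e. $L(X_\delta) \leq 2^\omega$. The subtle point throughout is that one must quote a version of Theorem~\ref{ccharthm} with ``$t(X) \leq \kappa$ and a dense set of points of $\pi$-character $\leq \kappa$'' in place of ``character $\leq \kappa$'', and verify that the proof of that theorem only ever used the $\pi$-character (via the local $\pi$-base $\{V_n\}$ and the subseparability giving $w$ of a $G_\delta$ to be $\leq \kappa$) — this compatibility check is the real content, and everything else is the bookkeeping of elementary submodels already rehearsed above.
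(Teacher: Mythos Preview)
Your overall architecture matches the paper's: take an elementary submodel $M$ of size $2^\omega$, show $\mathcal{G}\cap M$ covers $\overline{X\cap M}$, then push this to all of $X$. But there are real gaps in the execution.

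First, your assertion that ``$\sigma$-countable-tightness of a compactum implies countable tightness'' is not available; this is a well-known open problem, and the paper deliberately avoids it. The whole point of the argument is to work with the countably tight pieces $Y_n$ directly, never claiming $t(X)=\omega$.

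Second, and most importantly, you gloss over the heart of Claim~1. To use the countable tightness of $Y_k$ at a point $x\in\overline{X\cap M}\cap Y_k$, you need $x\in\overline{Y_k\cap M}$, not merely $x\in\overline{X\cap M}$. Your parenthetical ``each $Y_n$ meets $M$ in a dense-in-its-trace set'' is precisely the nontrivial step: given $x\in\overline{X\cap M}$ and an open $V\ni x$, you can pick $y\in V\cap X\cap M$, but to find a point of $Y_k\cap M$ inside $V$ you need an open set $U\in M$ with $y\in U\subset V$. This requires a local base at $y$ of size $\leq 2^\omega$ lying inside $M$. The paper gets this from the Corollary that $\chi(X)\leq 2^\omega$ (which in turn comes from Lemma~\ref{lemJV}); you cite that Corollary earlier but never deploy it where it is actually needed.

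Two smaller issues: a $2^\omega$-closed elementary submodel of size $2^\omega$ does not exist in general (you would need $(2^\omega)^{2^\omega}=2^\omega$); the paper uses an $\omega$-closed model, which suffices once the countable sets $C_k$ are in hand. And following Claim~2 of Theorem~\ref{ccharthm} verbatim only yields that $\mathcal{G}\cap M$ has \emph{dense} union, i.e.\ a bound on $wL(X_\delta)$; for the full $L(X_\delta)\leq 2^\omega$ the paper instead uses compactness of $\overline{X\cap M}$ to extract a finite subcover from open sets in $M$ missing $p$, and then elementarity forces $X$ itself to be covered, a contradiction.
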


\begin{proof}
Let $\{X_n: n < \omega \}$, be a countable family of countably tight subspaces covering $X$. Let $\mathcal{U}$ be a $G_\delta$-cover of $X$. Without loss we can assume that for every $U \in \mathcal{U}$ there are open sets $\{O_n(U): n <\omega \}$ such that $\overline{O_{n+1}(U)} \subset O_n(U)$, for every $n <\omega$ and $U=\bigcap \{O_n(U): n < \omega \}$.

Let $\theta$ be a large enough regular cardinal and let $M$ be an $\omega$-closed elementary submodel of $H(\theta)$ such that $|M|=2^{\omega}$ and $M$ contains everything we need.

\noindent {\bf Claim}. $\mathcal{U} \cap M$ covers $\overline{X \cap M}$. 

\begin{proof}[Proof of Claim]

Let $x \in \overline{X \cap M}$. 

We claim that $x \in \overline{X_n \cap M}$, for every $n<\omega$. Indeed, fix $n<\omega$ and let $V$ be a neighbourhood of $x$. Pick $y \in V \cap (X \cap M)$. Then $y$ has a local base $\mathcal{U}_y \in M$ having cardinality continuum. By the assumptions on $M$, $\mathcal{U}_y \subset M$. Since $X_n$ is dense in $X$, $M$ reflects this and therefore for every $U \in \tau \cap M$ we have $U \cap X_n \cap M \neq \emptyset$. Hence for every $U \in \mathcal{U}_y$ we have $U \cap X_n \cap M \neq \emptyset$. It turns out that $V \cap X_n \cap M \neq \emptyset$, for every open neighbourhood $V$ of $x$, as we wanted.

Let $k<\omega$ be such that $x \in X_k$. Using the fact that $X_k$ has countable tightness we can choose a countable set $C_k \subset X_k \cap M$ such that $x \in \overline{C_k}$. Note that, since $M$ is countably closed, every subset of $C$ is an element of $M$. Since $\mathcal{U}$ covers $X$ there is $U \in \mathcal{U}$ such that $x \in U$. Note that $x \in \overline{O_i(U) \cap C}$, for every $i<\omega$. Let $B=\bigcap \{\overline{O_i(U) \cap C}: i < \omega \}$ and note that $B \in M$. We have $H(\theta) \models (\exists U \in \mathcal{U})(B \subset U)$. Since every free variable in the previous formula belongs to $M$, by elementarity we have $M \models (\exists U \in \mathcal{U})(B \subset U)$ and hence there is $U \in \mathcal{U} \cap M$ such that $x \in B \subset U$, which finishes the proof of the Claim.
\renewcommand{\qedsymbol}{$\triangle$}
\end{proof}

Let us now prove that $\mathcal{U} \cap M$ actually covers $X$, which will finish the proof.

Suppose this is not the case and let $p \in X \setminus \bigcup (\mathcal{U} \cap M)$. By the claim, for every $x \in \overline{X \cap M}$ we can pick a $U_x \in \mathcal{U} \cap M$ containing $x$. Then we can choose $m<\omega$ such that $p \notin O_m(U_x) \in M$. This means that we can cover $\overline{X \cap M}$ by an open family $\mathcal{V} \subset M$ such that $p \notin \bigcup \mathcal{V}$. By compactness we can then take a finite subfamily $\mathcal{F}$ of $\mathcal{U}$ such that $X \cap M \subset \bigcup \mathcal{F}$. Since $\mathcal{F} \in M$ this is equivalent to $M \models X \subset \bigcup \mathcal{F}$, which implies, by elementarity, $H(\theta) \models X \subset \bigcup \mathcal{F}$, and that is a contradiction because $p \in H(\theta) \setminus \bigcup \mathcal{F}$.

\end{proof}

\begin{lemma}
Let $X$ be a compact homogeneous space which is the union of finitely many countably tight subspaces. Then $L(X_\delta) \leq 2^{\omega}$.
\end{lemma}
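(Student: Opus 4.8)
The plan is to reduce the finitely-many-pieces case to the dense case already handled in Theorem \ref{lemdense}. Write $X = Y_1 \cup \dots \cup Y_n$ where each $Y_i$ is countably tight. The issue is that the $Y_i$ need not be dense, so Theorem \ref{lemdense} does not apply directly. However, compactness gives us a standard dichotomy: either some $Y_i$ has nonempty interior, or not. If no $Y_i$ has nonempty interior, then each $\overline{Y_i}$ is a closed nowhere dense set, so $X$ is a union of finitely many closed nowhere dense sets, contradicting the Baire category theorem for compacta. Hence some $Y_i$, say $Y_1$, has nonempty interior; pick a nonempty open $U$ with $U \subseteq \overline{Y_1}$. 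By a theorem of Motorov (or the Čech--Pospíšil-type argument used in the study of homogeneous compacta), a homogeneous compactum is covered by finitely many translates of any nonempty open set, so $X = g_1(U) \cup \dots \cup g_k(U)$ for suitable homeomorphisms $g_j$ of $X$. Each $g_j(U)$ is open and contained in $\overline{g_j(Y_1)}$, and $g_j(Y_1)$ is countably tight (tightness is a homeomorphism invariant).

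Now I would argue as follows: the set $g_j(Y_1)$ is countably tight and its closure contains the open set $g_j(U)$, so $g_j(Y_1) \cap g_j(U)$ is a countably tight dense subspace of the open set $g_j(U)$. To invoke Theorem \ref{lemdense} I want genuinely dense subsets of $X$. Here I would pass to the relative topology: set $Z_j = \overline{g_j(U)}$, a regular closed, hence clopen-free, subspace of $X$; it is compact, and $g_j(Y_1) \cap g_j(U)$ is dense in it and countably tight. But $Z_j$ need not be homogeneous. To get around this, note that the finitely many $g_j(U)$ cover $X$, so it suffices to produce, for each $j$, a countably tight \emph{dense} subspace of $X$ — and then apply Theorem \ref{lemdense} with the countable family consisting of these $n\cdot k$ dense pieces together with the observation that a countable union suffices. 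Concretely: for each $j$ the set $D_j := \bigcup_{l} g_l\big(g_j(Y_1) \cap g_j(U)\big)$, ranging over all $l$, is a union of finitely many countably tight sets whose union of closures covers $X$; I would instead take, for a fixed point picked in each $g_j(U) \cap g_j(Y_1)$, its orbit closure, but the cleanest route is:

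Since each $g_j(U)$ is open nonempty and they cover $X$, for each $j$ choose a countable dense-in-itself surrogate and form $X'_j := \bigcup_{m<\omega} h_{j,m}\big(g_j(Y_1) \cap g_j(U)\big)$ using homeomorphisms $h_{j,m}$ chosen so that $\{h_{j,m}(g_j(U)) : m<\omega\}$ covers $X$ (possible because $X$ is covered by finitely many translates of $g_j(U)$). Then each $X'_j$ is a union of countably many countably tight sets, hence covered by countably many countably tight subsets whose union is dense in $X$ — in fact $\overline{X'_j} = X$. Taking $\{X'_j : j \le k\}$ — finitely many sets, each a countable union of countably tight dense-ish pieces — we see $X$ is a union of countably many countably tight dense subspaces, and Theorem \ref{lemdense} yields $L(X_\delta) \le 2^\omega$.

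The main obstacle is the passage from "covered by finitely many countably tight pieces" to "covered by countably many countably tight \emph{dense} pieces": one must exploit homogeneity (via the Baire-category dichotomy plus the finite-cover-by-translates property of homogeneous compacta) to upgrade an arbitrary piece with nonempty-interior closure into a family of dense pieces, and care is needed because countable tightness must be preserved under the homeomorphisms used (which is automatic) and because the pieces one writes down must have closure equal to all of $X$, not merely nonempty interior. Once that combinatorial bookkeeping is set up correctly, the rest is an immediate appeal to Theorem \ref{lemdense}.
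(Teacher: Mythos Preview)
Your reduction has a genuine gap at the final step. You produce sets $X'_j = \bigcup_{m<\omega} h_{j,m}\bigl(g_j(Y_1)\cap g_j(U)\bigr)$ that are dense in $X$, but each $X'_j$ is only a \emph{countable union} of countably tight sets, and such a union need not be countably tight (e.g.\ $\omega_1+1 = \omega_1 \cup \{\omega_1\}$). Conversely, the individual translates $h_{j,m}\bigl(g_j(Y_1)\cap g_j(U)\bigr)$ are countably tight but not dense, and they do not cover $X$ either. So you never obtain a countable family of subspaces that are simultaneously dense, countably tight, and covering, which is exactly what Theorem~\ref{lemdense} requires. The Baire-category step you begin with also only secures one piece $Y_1$ with $\overline{Y_1}$ having nonempty interior; it says nothing about the other pieces on that open set.

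The paper avoids this obstacle by localizing rather than globalizing. One first shrinks to a nonempty open $V$ on which \emph{every} piece $F\in\mathcal F$ that meets $V$ is dense in $V$ (possible since $\mathcal F$ is finite: iteratively pass to an open subset missing any non-dense piece). Then $\overline V$ is a compactum covered by finitely many dense countably tight subspaces, and one reruns the \emph{argument} of Theorem~\ref{lemdense} on $\overline V$; homogeneity of $X$ is only needed to supply $\chi(\overline V)\le\chi(X)\le 2^\omega$, which the elementary-submodel proof uses. This gives $L(\overline V_\delta)\le 2^\omega$. Homogeneity then moves $V$ over all of $X$, and compactness extracts a finite subcover, yielding $L(X_\delta)\le 2^\omega$. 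The key move you are missing is that one should apply the proof (not the statement) of Theorem~\ref{lemdense} to a compact non-homogeneous piece, rather than try to manufacture globally dense countably tight subspaces of $X$.
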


\begin{proof}
Let $\mathcal{F}$ be a finite cover of $X$ by countably tight subspaces. We can find a non-empty open subset $V$ of $X$ such that $V \cap F$ is dense in $V$, whenever $V \cap F \neq \emptyset$ and $F \in \mathcal{F}$. Applying the argument proving Lemma $\ref{lemdense}$ to $\overline{V}$ we obtain that $L(\overline{V}_\delta) \leq 2^{\aleph_0}$. Using the homogeneity of $X$ we can find an open cover $\mathcal{V}$ of $X$ such that $L(\overline{V}_\delta) \leq 2^{\aleph_0}$, for every $V \in \mathcal{V}$. Choosing a finite subcover of $\mathcal{V}$ we see that $L(X_\delta) \leq 2^\omega$:
\end{proof}

The following lemma was noted independently by de la Vega and Ridderbos (see \cite{R} for the proof of a much more general statement).

\begin{lemma}
Let $X$ be a homogeneous space. Then $|X| \leq d(X)^{\pi \chi(X)}$.
\end{lemma}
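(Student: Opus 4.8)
The plan is to use an elementary submodel argument, combining density reflection with the point of small $\pi$-character that homogeneity provides. Set $\kappa = \pi\chi(X)$ and $\lambda = d(X)$. Fix a point $x_0 \in X$ with $\pi\chi(x_0, X) \leq \kappa$, witnessed by a $\pi$-base $\mathcal{B}_0$ at $x_0$ with $|\mathcal{B}_0| \leq \kappa$. Let $\theta$ be large and regular, and take $M \prec H(\theta)$ with $x_0, X, \mathcal{B}_0 \in M$, $|M| = \lambda^\kappa$, and $M^{<\kappa} \subset M$ (or at least $\kappa + 1 \subset M$ so that $\mathcal{B}_0 \subset M$). Also arrange that $M$ contains a dense set $D \subset X$ with $|D| \leq \lambda$, so $D \subset M$ and hence $|X \cap M| \leq \lambda^\kappa$. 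The goal is to show $X = X \cap M$, which forces $|X| \leq \lambda^\kappa$.

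The key step is to show $X \cap M$ is dense in $X$; once we have that, a standard argument closes things. For density: since $\pi\chi(x_0, X) \leq \kappa$ and $\mathcal{B}_0 \subset M$, every element of $\mathcal{B}_0$ meets $D \subset M$, so $x_0 \in \overline{D \cap (\text{appropriate pieces})}$; more cleanly, every nonempty open $U \in M$ contains some element of $D$, so $\overline{X \cap M} \supseteq \overline{D} = X$. Now suppose toward a contradiction that there is $p \in X \setminus (X \cap M)$, i.e. $p \notin M$. By homogeneity, fix a homeomorphism $h$ of $X$ with $h(x_0) = p$; then $\{h[B] : B \in \mathcal{B}_0\}$ is a $\pi$-base at $p$ of size $\leq \kappa$. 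The point $x_0$ itself lies in $\overline{X \cap M}$, and I want to transfer local information around $x_0$ inside $M$ to derive a contradiction at $p$.

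Here is the mechanism I expect to work. Enumerate $\mathcal{B}_0 = \{B_\alpha : \alpha < \kappa\}$. For each $\alpha$, pick (in $H(\theta)$, not yet in $M$) a point $d_\alpha \in B_\alpha \cap D$. Since $D \subset M$, each $d_\alpha \in M$; but the sequence $\langle d_\alpha : \alpha < \kappa\rangle$ need not be in $M$, so I instead argue: consider the statement "$x_0$ has a $\pi$-base of size $\leq \kappa$" — this is true in $H(\theta)$ with parameters $x_0, \kappa \in M$, hence true in $M$, so $M$ knows $\pi\chi(x_0, X) \leq \kappa$ and can pick such a $\pi$-base $\mathcal{C} \in M$ with $\mathcal{C} \subset M$. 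Then the relevant combinatorial object is $B = \bigcap\{\overline{C} : C \in \mathcal{C}\}$, or rather the family $\mathcal{C}$ itself: I claim $X \cap M$ is already all of $X$ because any $p \notin M$ would give, via homeomorphism, a point whose $\pi$-character forces it into the closure machinery, contradicting that $M$ already "sees" a dense set through $\mathcal{C}$.

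The main obstacle is making the homogeneity-transfer rigorous: homogeneity is not a first-order property $M$ can reflect directly, so I cannot simply say "$M$ thinks $X$ is homogeneous." The fix is to avoid needing $M$ to know homogeneity; instead use homogeneity only in $H(\theta)$ to produce, for the hypothetical $p \notin M$, a $\pi$-base of size $\kappa$ at $p$, and then observe that this $\pi$-base, together with the fact that $X \cap M$ is dense, lets us cover a neighbourhood structure around $p$ by sets avoiding $p$ — but elementarity then manufactures a point of $X$ (necessarily in $M$) that must equal $p$, contradiction. Concretely: pick for each basic set $W$ from the $\pi$-base at $p$ a point of $X \cap M$ inside $W$; the closure of these points contains $p$, and one argues $p \in \overline{A}$ for some countable-or-$\kappa$-sized $A \subset X\cap M$ with $A \in M$, whence $\overline{A} \in M$, and since $\overline{A}$ is a closed set in $M$ containing the point $p$, elementarity ($M \models \overline{A} \neq \emptyset$, pick a point) does not immediately give $p \in M$ — so the real finish must instead show $\overline{A} \subseteq X \cap M$ is impossible unless $\overline{A}$ is a singleton forced to lie in $M$. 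I expect the clean route is: take $\mathcal{C} \in M$ a $\pi$-base at $x_0$ of size $\kappa$, note by density of $X \cap M$ that $x_0 \in \overline{X \cap M}$, use that to get $x_0 \in B := \bigcap\{\overline{C}: C \in \mathcal{C}\} \in M$, apply homogeneity in $H(\theta)$ to see $B$ is a nonempty closed set with the property that a homeomorphic copy of it sits around $p$, and conclude $|X| = |X \cap M| \le \lambda^\kappa$ by showing every point of $X$ lies in $\overline{X\cap M}$ and then that $\overline{X \cap M}$, being a closed subspace reflected correctly, exhausts $X$. The delicate accounting of exactly which sequences land in $M$ is where the care is needed, but $\kappa$-closedness of $M$ plus $D \subset M$ should handle it.
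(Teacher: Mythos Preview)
The paper does not actually prove this lemma: it is stated with a citation to Ridderbos \cite{R} and used as a black box. So there is no ``paper's proof'' to compare against; I can only assess whether your argument stands on its own.

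It does not. You set up a $\kappa$-closed elementary submodel $M$ of size $\lambda^\kappa$ containing a dense set $D\subseteq M$, observe correctly that $X\cap M$ is dense, and then try to show that any $p\in X\setminus M$ leads to a contradiction. But you never close this step. You yourself note the obstruction: from a local $\pi$-base $\{W_\alpha:\alpha<\kappa\}$ at $p$ (obtained via homogeneity, hence \emph{not} in $M$) you can pick $d_\alpha\in W_\alpha\cap D\subseteq M$ and form $A=\{d_\alpha:\alpha<\kappa\}\in M$ with $p\in\overline{A}$; but $p\in\overline{A}$ and $\overline{A}\in M$ do not force $p\in M$. Your subsequent suggestions (looking at $\bigcap\{\overline{C}:C\in\mathcal{C}\}$ for a $\pi$-base $\mathcal{C}$ at $x_0$, or hoping $\overline{A}$ collapses to a singleton) do not lead anywhere concrete, and the final paragraph is an expression of hope rather than an argument.

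What is missing is a genuine combinatorial step, not just submodel bookkeeping: one must show that each point $p$ is \emph{determined} by a $\kappa$-sized piece of data drawn from $D$ (so that there are at most $\lambda^\kappa$ candidates). The standard proofs (de la Vega, Ridderbos) do this directly, without elementary submodels, by exploiting the Hausdorff property to write $\{p\}$ as an intersection of at most $\kappa$ closures of suitable subsets of a fixed $A_p\in[D]^{\le\kappa}$, and then counting. If you want to salvage the submodel route, you would need exactly this lemma to place $\{p\}$ inside $M$; the submodel adds nothing until that combinatorial core is in hand.
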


\begin{theorem} (Juh\'asz and van Mill)
Let $X$ be a compact homogeneous space which is the union of countably many dense countably tight subspaces or of finitely many countably tight subspaces. Then $|X| \leq 2^\omega$. 
\end{theorem}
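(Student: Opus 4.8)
The plan is to reduce the whole statement to two facts already in hand: the bound $L(X_\delta)\le 2^\omega$ --- which holds in the first case by Theorem~\ref{lemdense} and in the second case by the preceding lemma --- together with the de la Vega--Ridderbos inequality $|X|\le d(X)^{\pi\chi(X)}$ for homogeneous spaces. Thus the substantive work is to show that a $\sigma$-countably tight homogeneous compactum $X$ with $L(X_\delta)\le 2^\omega$ has both $\pi\chi(X)=\omega$ and $d(X)\le 2^\omega$.

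First I would observe that in either case of the hypothesis, $X$ is a $\sigma$-countably tight homogeneous compactum (a union of finitely many, or of countably many, countably tight subspaces is $\sigma$-countably tight). Hence Lemma~\ref{lemJV} applies and gives a point of countable $\pi$-character and a non-empty subseparable $G_\delta$ subset $G\subseteq X$. Since $X$ is homogeneous and $\pi$-character is invariant under homeomorphisms, $\pi\chi(X)=\omega$. Again by homogeneity, for each $x\in X$ there is a self-homeomorphism of $X$ carrying a fixed point of $G$ to $x$, and its image of $G$ is a subseparable $G_\delta$ set containing $x$; here I am using that homeomorphisms preserve "being a $G_\delta$ set" (they map countable intersections of open sets to countable intersections of open sets) and "being subseparable" (they send countable sets to countable sets and commute with closure). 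So every point of $X$ lies in some subseparable $G_\delta$ subset of $X$.

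Next I would exploit $L(X_\delta)\le 2^\omega$ to bound the density. By the previous paragraph, the family $\mathcal{G}$ of all subseparable $G_\delta$ subsets of $X$ covers $X$; since each member of $\mathcal{G}$ is a $G_\delta$ set it is open in $X_\delta$, so $\mathcal{G}$ is an open cover of $X_\delta$. Extract a subcover $\{G_\alpha:\alpha<2^\omega\}$, pick for each $\alpha$ a countable set $C_\alpha$ with $G_\alpha\subseteq\overline{C_\alpha}$, and let $C=\bigcup_{\alpha<2^\omega}C_\alpha$. Then $|C|\le 2^\omega$ and $X=\bigcup_\alpha G_\alpha\subseteq\bigcup_\alpha\overline{C_\alpha}\subseteq\overline{C}$, so $C$ is dense and $d(X)\le 2^\omega$. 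Finally, the de la Vega--Ridderbos lemma yields $|X|\le d(X)^{\pi\chi(X)}\le (2^\omega)^\omega=2^\omega$, which is the claim.

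I expect the only delicate point to be making sure the two forms of the hypothesis genuinely feed the same machinery: that $L(X_\delta)\le 2^\omega$ really does hold in the "finitely many countably tight subspaces" case (this is exactly the lemma proved just above, via localizing to the closure of an open set on which one of the pieces is dense and then using homogeneity plus compactness), and that it is $\sigma$-countable tightness, not the stronger hypothesis with \emph{dense} pieces, that Lemma~\ref{lemJV} actually requires. Everything else is routine: the invariance of $G_\delta$-ness and subseparability under homeomorphisms, the fact that $G_\delta$ sets are open in $X_\delta$, and the cardinal arithmetic $(2^\omega)^\omega=2^\omega$.
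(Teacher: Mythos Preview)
Your proposal is correct and follows essentially the same route as the paper: use Lemma~\ref{lemJV} plus homogeneity to get $\pi\chi(X)=\omega$ and a subseparable $G_\delta$ set through every point, use $L(X_\delta)\le 2^\omega$ to extract a continuum-sized subcover and hence $d(X)\le 2^\omega$, and finish with the de la Vega--Ridderbos inequality. The only cosmetic difference is that the paper passes through $w(G_x)\le 2^\omega$ to choose continuum-sized dense subsets of the $G_x$, whereas you directly take the countable witnesses to subseparability; your version is in fact slightly more streamlined.
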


\begin{proof}
Use homogeneity to fix, for every $x \in X$, a subseparable $G_\delta$ set $G_x$ containing $x$. We have $w(G_x) \leq 2^\omega$. Note that $\mathcal{U}=\{G_x: x \in X \}$ is a $G_\delta$ cover of $X$, so there is $C \in [X]^{2^{\omega}}$ such that $X \subset \bigcup \{G_x: x \in C \}$. For every $x \in C$, we can fix a continuum-sized $D_x \subset G_x$, dense in $G_x$. Then $D=\bigcup \{D_x: x \in C \}$ is a dense subset of $X$ having cardinality at most continuum, proving that $d(X) \leq 2^\omega$. Using the above lemmas we obtain that $|X| \leq 2^{\omega}$.
\end{proof}

\end{document}